\renewcommand{\@biblabel}[1]{\quad#1.}
\definecolor{Gray}{gray}{.25}
\newtheorem{Pro}{Proposition}
\newtheorem{Def}{Definition}
\newtheorem{Lem}{Lemma}[section]
\newtheorem{Col}{Corollary}[section]
\newtheorem{Thm}{Theorem}[section]
\newtheorem{Rmk}{Remark}
\newtheorem{Emp}{Example}
\DeclareMathOperator{\vz}{\textbf{z}}
\DeclarePairedDelimiter{\norm}{\lVert}{\rVert}
\NewDocumentCommand{\normL}{ s O{} m }{%
  \IfBooleanTF{#1}{\norm*{#3}}{\norm[#2]{#3}}_{L_2(\Omega)}%
}
\begin{document}
\vspace*{0.35in}

% title goes here:
\begin{flushleft}
{\Large
\textbf\newline{Relative Periodic Solutions Of The N-Vortex
    Problem Via The Variational Method}
}
\newline

% authors go here:

Qun WANG\textsuperscript{*}\\
\bigskip
Universit\'e Paris-Dauphine, PSL Research University, CNRS, UMR 7534, CEREMADE,
F-75016 Paris, France\\
\bigskip
* wangqun927@gmail.com
\end{flushleft}

\section*{Abstract}
This article studies the N-vortex problem in the plane with positive vorticities. After an investigation of some properties for normalised relative equilibria of the system, we use symplectic capacity theory to show that, there exist infinitely many normalised relative periodic orbits on a dense subset of all energy levels, which are neither fixed points nor relative equilibria.
% now start line numbers
%\linenumbers

% the * after section prevents numbering
%Chapter 1
\section{Introduction}
\label{sec:1}
%%%%%%%%%%%%%%%%%%%%%%%% Section 1.1 %%%%%%%%%%%%%%%%%%%%%%%%
\subsection{The N-Vortex Problem in the Plane}
\label{subsec:1.1}
The study of vortex dynamics dates back to Helmholtz's work on
hydrodynamics in 1858 \cite{helmholtz}. It has been linked to
superfluids, superconductivity, and stellar
system\cite{lugt1983vortex}. Known as the Kirchhoff Problem, its
Hamiltonian structure is first explicitly found
by Kirchhoff \cite{kirchhoff1876vorlesungen} for $\mathbb{R}^2$, and later on
generalized by Routh \cite{routh1880some} and then Lim
\cite{lim1943motion} to general domains in the plane. Here
we consider the problem in the plane,
\begin{equation}
\mathbf{\Gamma} \dot{\vz} (t) = \textit{X}_H(\vz(t)) = \mathcal{J}_N\nabla
  H(\vz(t)), \quad \dot{\vz}=(z_1,z_2,...,z_N),\quad  z_i=(x_i,y_i)\in
  \mathbb{R}^2 \label{sys:H1}  \tag{H1}
\end{equation}
where the Hamiltonian is 
\begin{equation}
  \label{eq:Ham}%
  H(z)= -\frac{1}{4\pi}\sum_{1 \leq i< j\leq
    N}\Gamma_{i}\Gamma_{j}\log{|z_i -z_j|^2}
\end{equation}
while the Poisson matrix $\mathcal{J}_N$ and the vorticity matrix $\Gamma$ are
\begin{align}
\mathcal{J}_{N} = 
\begin{bmatrix}
\mathbb{J} &       &      \\  
          & \ddots &      \\
          &       & \mathbb{J}
\end{bmatrix}, \quad 
\mathbb{J} =
\begin{bmatrix}
0  & 1 \\
-1 & 0
\end{bmatrix}\\
\mathbf{\Gamma} = 
  \begin{bmatrix}
    \Gamma_1 &&&&&\\
    & \Gamma_1 &&&&\\
    &&\ddots &&&  \\
    &&&\ddots &&   \\ 
    &&&&\Gamma_N & \\
    &&&&& \Gamma_N 
  \end{bmatrix}.
\end{align}
It is understood that 
\begin{itemize}
\item $N$ is the number of vortices;
\item $z_i = (x_i, y_i)$ is the position of the i-th vortex in the plane;
\item $\Gamma_i \in \mathbb{R}\setminus\{0\}$ is the vorticity of the i-th vortex;
\item $\textbf{z}= (z_1,z_2,...,z_N)$ is the vortices configuration;
\item $\textit{X}_H$ is the Hamiltonian vector field of $H$;
\item $\mathcal{J}_N$ is a $2N\times 2N$ block-diagonal matrix;
  obtained by putting $N$ copies of $2\times 2$ matrice $\mathbb{J}$
  on the diagonal.
\end{itemize}
Sometimes we will need to consider a sequence of vortices
configurations. In that case we will denote this sequence
$\{\textbf{z}^k\}_{k\in \mathbb{N}}$, with upper indices as opposed to
lower indices refering to particular vorticies. The quantity
\begin{align}
 L = \sum_{1\leq i< j\leq N} \Gamma_i\Gamma_j
\end{align}
is called the \textbf{total angular momentum} and will be used
frequently; if $V$ is a subset of \{1,2,...,N\}, we also define
$\displaystyle L_V = \sum_{i,j \in V, i< j}\Gamma_i\Gamma_j$.
Finally, throughout this article, if not explicitly emphasized, \textbf{we
always suppose that all vortices are of positive vorticity} :
\begin{equation}
  \label{Hyp:positive}
  \Gamma_i > 0 \quad (1\leq i \leq N).
  \tag{Hypo}
\end{equation}
Hence $L>0$, and
$L_V>0$ for all $V$'s.
%%%%%%%%%%%%%%%%%%%%%%%% Section 1.2 %%%%%%%%%%%%%%%%%%%%%%%%
\subsection{Symmetries, First Integrals, and Integrability}
\label{subsec:1.2}
Let $\mathbb{E}(2)= \mathbb{T}(2)\rtimes \mathbb{O}(2)$ be the Euclidean group, where $\mathbb{O}(2)$ is the orthogonal group and $\mathbb{T}(2)$ is the translation group. Consider the action 
\begin{align*}
&\mathbb{E}(2) \times (\mathbb{R}^{2})^N \rightarrow (\mathbb{R}^{2})^N\\
&(g,\textbf{z}) \rightarrow  g\textbf{z}  
\end{align*}
where
\begin{align*}
g\textbf{z} = (gz_1,gz_2,...,g z_N), \forall g\in \mathbb{E}(2), z= (z_1,...,z_N) \in \mathbb{R}^{2N}
\end{align*}
Note that system \eqref{sys:H1} is invariant under both translation and
rotation, thus the corresponding quantities
\begin{align}
P(\textbf{z}) = \sum_{1\leq i\leq N}\Gamma_i x_i,\quad Q(\textbf{z})= \sum_{1\leq i \leq N} \Gamma_i y_i,\quad I(\textbf{z}) = \sum_{1\leq i\leq N} \Gamma_i|z_i|^2
\end{align}
are first integrals. These first integrals are \textbf{not} in
involution in general with respect to the Poisson bracket
\begin{align*}
\{f,g\} = \sum_{1\leq i\leq N} \frac{1}{\Gamma_i} (\frac{\partial f}{\partial y_i}\frac{\partial g}{\partial x_i}-\frac{\partial f}{\partial x_i}\frac{\partial g}{\partial y_i}).
\end{align*}
Actually, 
\begin{align}
\{P,I\} = -2Q,\quad \{Q,I\} = 2P.
\end{align}
On the other hand, note that $H, I, P^2+ Q^2$ are independent first
integrals in involution. Hence the 3-vortex problem is integrable. For
$N\geq 4$, the $N$-vortex problem is in general not integrable
\cite{ziglin1980nonintegrability,koiller1989non}. This allows us to
draw a parallel between the $N$-vortex problem with $N \geq 4$ and the
$N$-body problem with $N \geq 3$, and thus side with Poincar{\'e} when he
famously described the study of periodic orbits as \enquote{the only
  opening through which we can try to penetrate in a place which, up
  to now, was supposed to be
  inaccessible}\cite[section~36]{poincare1892methodes}. In this article, we
study the existence of relative periodic orbits in the $N$-vortex
system, i.e., orbits which are periodic up to rotations. Note that from
experimental point of view, it is relative equilibria that have been
first realized for vortices of superfluid $\ch{^4He}$
\cite{yarmchuk1979observation}. Hence from either theoretical or
practical consideration, relative periodic orbit will be an ideal
candidate for our analysis.

%%%%%%%%%%%%%%%%%%%%%%%% Section 1.3 %%%%%%%%%%%%%%%%%%%%%%%%
\subsection{Normalised Orbits}
\label{subsec:1.3}
The closed orbits of N-vortex problem \eqref{sys:H1} are not
isolated. Indeed, if $\textbf{z}(t)$ is an orbit, then so are
\begin{itemize}
\item $(z_1(t)+c, \cdots, z_N(t)+c)$, $c\in \mathbb{R}^2$
\item $\lambda^{\frac{1}{2}}\textbf{z}(\frac{t}{\lambda})$, $
  \lambda \in \mathbb{R}^{+}$. 
\end{itemize}
We wish not to distinguish such orbits. To this end, we give the following definition:

\begin{Def}
\label{Def:orbit}
we will call an orbit $\textbf{z}(t)$ of the system \eqref{sys:H1}
\begin{enumerate}
\item \textbf{centred} if it satisfies $P(\textbf{z}(t)) = Q(\textbf{z}(t)) = 0$
\item \textbf{normalised} if it is centred and satisfies $I(\textbf{z}(t)) = 1$
\item \textbf{periodic} if $\textbf{z}(t) = \textbf{z}(t+T)$ for some $T>0$
\item \textbf{relatively periodic orbit (RPO)} if $\textbf{z}(t) = g \textbf{z}(t+T)$ for some $T>0$ and $g \in \mathbb{E}(2)$
\end{enumerate}
\end{Def}
Thus, the abbreviation \textbf{NRPO} will stand for a \textbf{normalised relative periodic
  orbit}. Note that in particular for a NRPO we have $g\in \mathbb{O}(2)$ in the above definition.
A periodic solution of the planar $N$-vortex problem s.t. $\sum_{i=1}^N\Gamma_{i} \neq 0$ is called a
\textbf{relative equilibrium}, if it is of the form
\begin{align*}
z_i(t)= e^{\mathbb{J}\omega t}(z_i(0)-c)+ c
\end{align*}
where $\displaystyle c = \frac{\sum_{i=1}^N \Gamma_i z_i}{\sum_{i=1}^N \Gamma_i }$ is the vorticity center. This is a special configuration where all the vortices rigidly rotate about their center of vorticity $c$. In particular, given a normalised relative equilibrium, i.e., $c = (0,0)$ and $I(\vz) =1$, it is of course a NRPO. We define
\begin{align*}
&\mathcal{Z}_{0}(H)= \{\textbf{z}| \text{\textbf{z} is a normalised orbit of the system } \eqref{sys:H1} \}\\
&\mathcal{Z}_{1}(H) =\{\textbf{z}| \text{\textbf{z} is a normalised relative equilibrium of the system } \eqref{sys:H1} \}
\end{align*}
We list some properties that will be used frequently later on:
\begin{Pro}
\label{Pro:NRE}
The following are equivalent:
\begin{align}
(1)& \quad \textbf{z} \in \mathcal{Z}_{1};      \label{E1} \\
(2)&  \quad \nabla H(\textbf{z}) = -\frac{L}{4\pi} \nabla I(\textbf{z}) \label{E2}
\end{align}
\end{Pro}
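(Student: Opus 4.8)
The plan is to prove the equivalence by direct computation, leaning on two structural facts: the invariance of $H$ under rigid rotations and an Euler-type scaling identity that pins down the proportionality constant. First I would record the equations of motion \eqref{sys:H1} block by block, writing $\nabla_{z_i}$ for the gradient with respect to the $i$-th vortex: they read $\Gamma_i \dot z_i = \mathbb{J}\,\nabla_{z_i}H(\vz)$. A normalised relative equilibrium is a rigid rotation about the origin, $z_i(t) = e^{\mathbb{J}\omega t}z_i(0)$, so that $\dot z_i = \omega\mathbb{J} z_i$; substituting this into the block equation and cancelling $\mathbb{J}$ (using $\mathbb{J}^2 = -\mathrm{Id}$) gives $\nabla_{z_i}H(\vz) = \omega\,\Gamma_i z_i$. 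Since a short computation shows $\nabla_{z_i} I(\vz) = 2\Gamma_i z_i$, this already yields the proportionality $\nabla H = \tfrac{\omega}{2}\nabla I$; only the value of the scalar remains to be found, which establishes the direction \eqref{E1}$\Rightarrow$\eqref{E2} up to the constant.

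To fix that constant I would exploit homogeneity. Because $|\lambda z_i - \lambda z_j|^2 = \lambda^2 |z_i - z_j|^2$, one has $H(\lambda\vz) = H(\vz) - \tfrac{L}{2\pi}\log\lambda$; differentiating in $\lambda$ at $\lambda = 1$ produces the Euler identity $\nabla H(\vz)\cdot\vz = -\tfrac{L}{2\pi}$, which holds at every configuration. Pairing this with $\nabla I(\vz)\cdot\vz = 2 I(\vz) = 2$ for a normalised $\vz$, and taking the inner product of $\nabla H = \tfrac{\omega}{2}\nabla I$ with $\vz$, forces $-\tfrac{L}{2\pi} = \tfrac{\omega}{2}\cdot 2$. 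Hence $\omega = -\tfrac{L}{2\pi}$ and the proportionality constant is exactly $-\tfrac{L}{4\pi}$, which is precisely \eqref{E2}.

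For the converse \eqref{E2}$\Rightarrow$\eqref{E1}, I would run the argument backwards. Using $\nabla_{z_i} I = 2\Gamma_i z_i$ again, condition \eqref{E2} reads $\nabla_{z_i}H(\vz) = -\tfrac{L}{2\pi}\Gamma_i z_i$; setting $\omega = -\tfrac{L}{2\pi}$ I claim $\vz(t) = e^{\mathbb{J}\omega t}\vz$ solves \eqref{sys:H1}. The decisive point is that $H$ is invariant under the diagonal action of rotations, so its gradient is equivariant: $\nabla_{z_i}H(e^{\mathbb{J}\omega t}\vz) = e^{\mathbb{J}\omega t}\nabla_{z_i}H(\vz) = \omega\Gamma_i\, e^{\mathbb{J}\omega t} z_i = \omega\Gamma_i z_i(t)$. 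Substituting into the block equation gives $\Gamma_i \dot z_i(t) = \omega\Gamma_i\mathbb{J} z_i(t) = \mathbb{J}\,\nabla_{z_i}H(\vz(t))$, so $\vz(t)$ is a genuine solution; being a rigid rotation about the origin of a centred configuration with $I = 1$, it is by definition a normalised relative equilibrium, i.e. $\vz\in\mathcal{Z}_1$. The computations themselves are routine, and I expect the only delicate ingredient to be the scaling identity $\nabla H\cdot\vz = -\tfrac{L}{2\pi}$: it is what fixes the numerical coefficient and explains the role of the normalisation $I=1$ (indeed, combined with the Euler identity, \eqref{E2} already forces $I(\vz)=1$, so the two formulations are mutually consistent).
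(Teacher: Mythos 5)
Your argument for \eqref{E1}$\Rightarrow$\eqref{E2} is correct and is essentially the paper's own: both reduce the relative-equilibrium condition to $\nabla H = \frac{\omega}{2}\nabla I$ and then fix the constant by pairing with $\vz$, using the scaling identity $\nabla H(\vz)\cdot\vz = -\frac{L}{2\pi}$ together with $I(\vz)=1$. The converse, however, has a genuine gap: you conclude that $\vz(t)=e^{\mathbb{J}\omega t}\vz$ is a rigid rotation about the origin \emph{of a centred configuration} with $I=1$, but centredness is never proved. Condition \eqref{E2} is a pointwise equation on gradients and carries no a priori information that $P(\vz)=Q(\vz)=0$; yet membership in $\mathcal{Z}_1$ requires it, and by the paper's definition a relative equilibrium must rotate about its \emph{vorticity center} $c$, so if $c\neq 0$ your rotation about the origin (with $\omega=-\frac{L}{2\pi}\neq 0$) would not even qualify as a relative equilibrium. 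You do close the analogous loop for $I(\vz)=1$ via the Euler identity, but the centring step --- which is exactly where the paper spends most of its effort in this direction --- is missing.

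The gap is real but easily filled, by the same cancellation the paper uses (in complex notation): sum \eqref{E2} over the blocks $i=1,\dots,N$. Translation invariance of $H$ makes the left-hand side vanish identically, since $\sum_i\nabla_{z_i}H\equiv 0$ (the interaction terms cancel in pairs), while the right-hand side equals $-\frac{L}{2\pi}\sum_i\Gamma_i z_i$; since $L>0$ under \eqref{Hyp:positive}, this forces $\sum_i\Gamma_i z_i=0$. Alternatively, within your own dynamical framing: $P$ and $Q$ are first integrals, and along your solution the vorticity center evolves as $e^{\mathbb{J}\omega t}c$, which is constant only if $c=0$ because $\omega\neq 0$. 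Either one-line argument restores the proof; with it, your route becomes a slightly more explicit version of the paper's, since you verify by rotation-equivariance that the rigid rotation genuinely solves \eqref{sys:H1}, where the paper simply asserts that the flow through $\vz$ is a relative equilibrium.
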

\begin{proof}:
(1)$\Rightarrow$ (2) : By definition of relative equilibrium, $\textbf{z}(t)\in \mathcal{Z}_{1}$ implies $\exists \omega\in \mathbb{R}$ s.t. 
\begin{align*}
\nabla H(\textbf{z}(t)) = \frac{\omega}{2} \nabla I(\textbf{z}(t)) 
\end{align*}
taking inner product with $\textbf{z}(t)$ on both sides. Since $I(\textbf{z})=1$, one sees that 
\begin{align*}
-\frac{L}{2\pi} = \omega I(\textbf{z}(t)) \Rightarrow \frac{\omega}{2} = -\frac{L}{4\pi}
\end{align*}
Hence (2) is proved.\\
(2)$\Rightarrow$ (1) : If $\textbf{z}$ satisfies that $\displaystyle \nabla H(\textbf{z}) = -\frac{L}{4\pi} \nabla I(\textbf{z})$, then the flow passing through $\textbf{z}$ will be a relative equilibrium. We need to show that such a relative equilibrium is normalised. First, by considering $(x,y)\in \mathbb{R}^2 $ as a complex number $x+iy \in \mathbb{C}$, (\ref{E2}) implies that 
\begin{align*}
-\frac{1}{2\pi}\sum_{j\neq i } \Gamma_i\Gamma_j \frac{\bar{z}_i -\bar{z}_j }{|z_i-z_j|^2} = -\frac{L}{4\pi} \Gamma_i \bar{z}_i, \quad \forall 1\leq i \leq N
\end{align*}
It follows that 
\begin{align*}
0 = -\frac{1}{2\pi}\sum_{i=1}^{N}\sum_{j\neq i } \Gamma_j\Gamma_i \frac{\bar{z}_i -\bar{z}_j }{|z_i-z_j|^2} = -\sum_{i=1}^{N}\frac{L}{4\pi} \Gamma_i \bar{z}_i  
\end{align*}
Thus $\sum_{i=1}^{N}\Gamma_i z_i =0 $, and $\textbf{z}$ is centred. Next, multiply $\textbf{z}$ on both sides of (\ref{E2}), so that $ \displaystyle-\frac{L}{2\pi} = \nabla H(\textbf{z}) \textbf{z} =-\frac{L}{4\pi} \nabla I(\textbf{z})\vz = -\frac{L}{2\pi} I(\textbf{z})$. Thus $I(\textbf{z})=1$.  
\end{proof}
The first such configuration, found In 1883 by J.J. Thomson, is the
so-called Thomson configuration \cite{thomson1883treatise}, i.e., $N$
identical vortices located at the vertices of a N-polygon and rotating
uniformly around its center of vorticity (which could be fixed to the
origin).

\begin{figure}
\begin{center}
\includegraphics[width=40mm,scale=0.5]{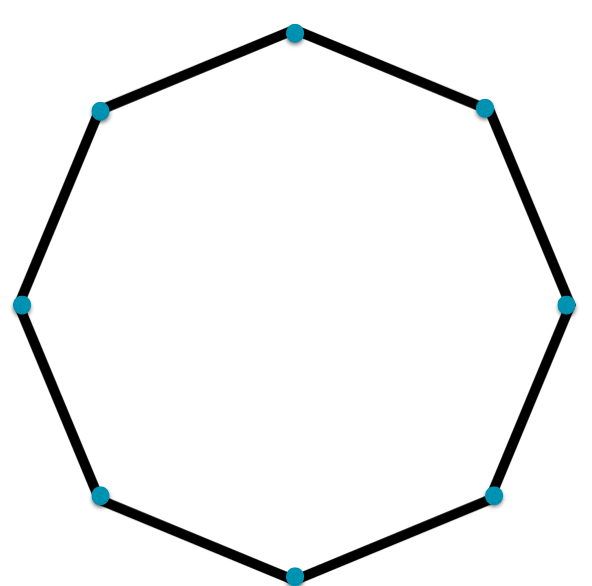}
\captionsetup{justification=centering}
\caption{Thomson configuration for 8 vortices which form an octagon}
\label{fig:1}
\end{center}
\end{figure}
There has since then been extensive studies on relative equilibria in
the planar $N$-vortex problem, see for example
\cite{palmore1982relative,o1987stationary,roberts2013stability}. The
study of relative equilibria is a subject in itself. Although their
number and even their finiteness are unknown as functions of $N$ (see  \cite{hampton2009finiteness,o2007relative} for the special case when $N = 4$), one
does not expect that relative equilibria could in general be abundant in the
phase space. Hence our interest will be on the RPOs that are not relative
equilibria.

\begin{Def}
\label{def:NTNRPO}
We say an orbit \textbf{z}(t) is a \textbf{non-trivial normalised relative periodic orbit} (NTNRPO) if \textbf{z}(t) is a normalised relative periodic orbit but \textbf{not} a normalised relative equilibrium.
\end{Def}

NTNRPO's could conjecturally be dense in some open sets of the phase
space, similarly to what Poincar{\'e} conjectured for the $N$-body
problem. Correspondingly define
\begin{align*}
  \mathcal{Z}_{2}(H)= \{\textbf{z}| \text{\textbf{z} is a 
  non-trivial normalised relative periodic orbit of the system } \eqref{sys:H1} \}.
\end{align*}
So, in this article, we focus on the search of NTNRPOs of the
$N$-vortex problem. In 1949, Synge has given a thorough study of
relative periodic solutions of three vortices in
\cite{synge1949motion} (see also \cite{aref1979motion}). Since the
$N$-vortex problem is in general not integrable, it is thus more
complicated to find NTNRPOs when more vortices are presented. Several
difficulties occur in the search of periodic solutions for system \eqref{sys:H1}, for example,
\begin{itemize}
\item the system is singular around collisions/infinity
\item energy surfaces are not compact
\item the Hamiltonian is not convex
\end{itemize}
Due to these difficulties, methods in \cite{ekeland2012convexity,hofer2012symplectic,carminati2006fixed} cannot be applied
directly. Some NTNRPOs can be found by applying various perturbative
arguments around relative equilibria, as in
\cite{borisov2004absolute}; see also \cite{carvalho2014lyapunov} for
the application of the Lyapunov centre theorem, and
\cite{bartsch2016periodic,bartsch2017global} for the application of
degree theory. We would like to make some contribution for the general
knowledge on existence of such orbits whose energy might be far from
the energy of the relative equilibra with an arbitrary number $N$ of
vortices and for arbitrary positive vorticity $\mathbf{\Gamma}$. To this end
some global method is needed. We focus on variational arguments,
instead of perturbative methods. The existence of non-relative
equilibrium solution will rely on the absence of relative equilibrium
(see \textbf{Figure} \ref{fig:2}). Define
\begin{align*}
\mathcal{H}_{0}& = \{h \in \mathbb{R} | h = H(\textbf{z}), \textbf{z}\in \mathcal{Z}_{0}(H)\}\\
\mathcal{H}_{1}& = \{h \in \mathbb{R} | h = H(\textbf{z}), \textbf{z}\in \mathcal{Z}_{1}(H)\}\\
\mathcal{H}_{2}& = \{h \in \mathbb{R} | h = H(\textbf{z}), \textbf{z}\in \mathcal{Z}_{2}(H)\}\\
\end{align*}
Note that these are well-defined because the Hamiltonian $H$ is
autonomous, thus it is constant along its flow. Clearly
$\mathcal{H}_{2} \subset \mathcal{H}_{0} $. Our main
result is that:
\begin{Thm}
\label{Thm:Main}
Under hypothesis \eqref{Hyp:positive}, $\mathcal{H}_{2}$ is dense in $\mathcal{H}_{0}$.
\end{Thm}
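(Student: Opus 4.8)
The plan is to remove the Euclidean symmetries and the non-compactness at one stroke by passing to the symplectically reduced system, and then to feed the resulting compact energy surfaces into the almost-existence theorem of symplectic capacity theory. Because $\{P,Q\} = -\sum_i \Gamma_i \neq 0$, the two translational momenta form a canonically conjugate pair, so the centred configurations $\mathcal{M}_c = \{P = Q = 0\}$ constitute a symplectic submanifold of dimension $2N-2$, and translation invariance $\{H,P\} = \{H,Q\} = 0$ guarantees that the flow of $H$ restricts to $\mathcal{M}_c$. On $\mathcal{M}_c$ the rotation group $SO(2)$ acts with momentum map $I$, and under \eqref{Hyp:positive} a rotation through a non-zero angle fixes a configuration only when every vortex sits at the origin, forcing $I = 0$; hence the action is free on $\{I = 1\}$. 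Marsden--Weinstein reduction at the level $I = 1$ then yields a smooth symplectic manifold $P_1 = (\{P = Q = 0,\, I = 1\}\setminus\Delta)/SO(2)$ of dimension $2N-4$, where $\Delta$ is the collision set, and $H$ descends to a Hamiltonian $\bar H$ on $P_1$. The governing dictionary is that non-constant periodic orbits of the $\bar H$-flow lift precisely to NRPOs with $\mathbf{z}(T) = g\,\mathbf{z}(0)$, $g \in SO(2)$, whereas the equilibria of $\bar H$ are exactly the normalised relative equilibria of Proposition~\ref{Pro:NRE}; thus NTNRPOs are the non-constant closed orbits of $\bar H$, and $\mathcal{H}_0$ is the range of $\bar H$.

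Next I would establish compactness of the reduced energy surfaces, the step in which positive vorticity is indispensable. The constraint $I = \sum_i \Gamma_i |z_i|^2 = 1$ with every $\Gamma_i > 0$ confines the configuration to a fixed ellipsoid, forbidding escape to infinity and bounding all mutual distances, so $H$ is bounded below on $\{I = 1\}$. Near a collision the term $-\tfrac{1}{4\pi}\Gamma_i\Gamma_j\log|z_i - z_j|^2$ tends to $+\infty$, again by $\Gamma_i\Gamma_j > 0$, so $H \to +\infty$ along $\Delta$ and $\bar H$ is a proper, bounded-below exhaustion function of $P_1$. Consequently $\bar H^{-1}([a,b])$ is compact for every finite interval, and the range of $\bar H$, which equals $\mathcal{H}_0$, is an interval of the form $[h_{\min}, +\infty)$.

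Then I would apply the Hofer--Zehnder almost-existence theorem. Given a target value $h_0 \in \mathcal{H}_0$ and an arbitrarily small window $[a,b] \ni h_0$, the compact set $\bar H^{-1}([a,b])$ admits a relatively compact neighbourhood $U \subset P_1$; realising $U$ inside finitely many Darboux charts of bounded size one obtains $c_{HZ}(U) < \infty$. The almost-existence theorem then furnishes a closed characteristic of $\bar H$ on $\bar H^{-1}(c)$ for almost every $c \in [a,b]$. By Sard's theorem almost every such $c$ is a regular value, so $\bar H^{-1}(c)$ carries no equilibrium and the closed characteristic is a genuine non-constant periodic orbit, i.e. an NTNRPO. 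Since such $c$ fill a full-measure, hence dense, subset of each window $[a,b]$, the set $\mathcal{H}_2$ meets every neighbourhood of every point of $\mathcal{H}_0$, which is the assertion.

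The step I expect to be the main obstacle is the verification that $c_{HZ}(U) < \infty$: the reduced manifold $P_1$ may carry non-trivial topology, and finiteness of the Hofer--Zehnder capacity is precisely the hypothesis that drives the almost-existence machinery. Making this rigorous --- by localising to a compact energy band and embedding it in a bounded piece of a Darboux chart, or by exhibiting the reduced energy surfaces as being of contact type or displaceable --- is the analytic heart of the argument, and it is exactly here that the boundedness coming from $I = 1$ and the collision barrier $H \to +\infty$, both consequences of \eqref{Hyp:positive}, must be converted into the quantitative symplectic input.
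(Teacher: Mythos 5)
Your overall architecture --- fix the translations, reduce by the $SO(2)$-action at $I=1$, prove that the reduced Hamiltonian is proper so that energy bands are compact, then invoke the Hofer--Zehnder almost-existence theorem together with Sard's theorem --- is exactly the strategy of the paper (Sections \ref{sec:2}--\ref{sec:3}), and your reduction, compactness and lifting arguments are sound. The genuine gap is the step you yourself flag: finiteness of the Hofer--Zehnder capacity. Your proposed justification, that realising $U$ inside finitely many Darboux charts of bounded size yields $c_{HZ}(U)<\infty$, is false. The capacity is monotone under symplectic embeddings of the \emph{whole} set, but it is not subadditive with respect to coverings, and finiteness is not a local property: Zehnder's example (see \cite{hofer2012symplectic}, Chapter 4) of a torus $T^4$ with a constant but ``irrational'' symplectic structure carries a smooth Hamiltonian whose compact regular energy surfaces have no non-constant periodic orbit on any level; any neighbourhood of such a surface therefore has infinite capacity, although it is compact and covered by finitely many Darboux charts. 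So almost existence genuinely depends on the global symplectic topology of the reduced space, and nothing in your argument supplies that input. Your fallback suggestions do not close it either: contact type is precisely the extra hypothesis the paper's final Remark of Section \ref{sec:3} says is needed for existence on a prescribed level, and displaceability is unavailable for large subsets of a closed manifold.

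This is where the paper earns its keep: Lim's coordinates (Proposition~\ref{Pro:LimTransform}), which require $\Gamma_i>0$ so that $z_i \mapsto \sqrt{\Gamma_i}\, z_i$ is real and the resulting linear map is \emph{unitary}, turn the reduction at $\bar I = 1$ into the Hopf fibration $\mathbb{S}^{2N-3} \rightarrow \mathbb{CP}^{N-2}$, identifying your abstract Marsden--Weinstein quotient $P_1$ with an open subset of $\mathbb{CP}^{N-2}$ carrying the Fubini--Study form. The finiteness $c_0(\mathbb{CP}^{N-2},\omega)=\pi$ is then the Hofer--Viterbo theorem \cite{hofer1992weinstein} (the Weinstein conjecture in the presence of holomorphic spheres), a deep global result, and monotonicity gives $c_0(U)<\infty$ for the neighbourhoods you need; this is the content of Theorem~\ref{Thm:ManyRPOLocal}. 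Thus positivity of the vorticities is used twice: once for properness and compactness of the reduced energy surfaces, as you do use it, and once to pin down the symplectomorphism type of the reduced space, which you do not. If you replace the Darboux-chart claim by this identification and the citation of Hofer--Viterbo, your proof becomes essentially the paper's proof, with the minor and legitimate simplification that you apply Sard directly on the reduced space where the paper routes through Lemma~\ref{Lem:MinDist}, Theorem~\ref{Thm:ClosedNull} and Corollary~\ref{Col:OpenDense}.
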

\begin{figure}
\label{VariPert}
\captionsetup{justification=centering}
\begin{center}
\includegraphics[width=70mm,scale=0.5]{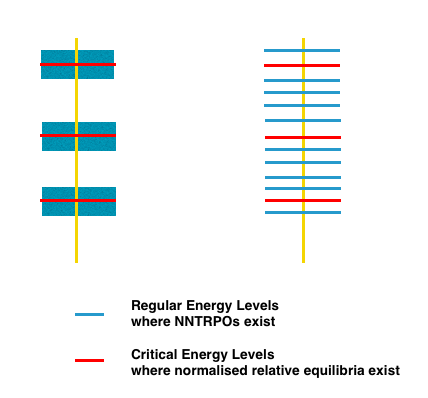}
\caption{Perturbative Method(Left) VS Variational Method(Right).}
\label{fig:2}
\end{center}
\end{figure}
The rest of the article is organized in the following way:
\begin{itemize}
\item In chapter \ref{sec:2}, we study the normalised relative equilibria of $H$. In particular, we show that the they are isolated from the region of singularity (\textbf{Lemma \ref{Lem:MinDist}}). Using this fact we show further that the set of critical value is very small(\textbf{Theorem \ref{Thm:ClosedNull}}). In case of positive rational vorticity, it even can be shown to be a finite set (\textbf{Theorem \ref{Thm:RationalFinite}});
\item In chpater \ref{sec:3}, we show that by applying some symplectic reduction, we can focus on dynamics in the reduced phase space, where the energy level is compact (\textbf{Lemma \ref{Lem:compact}}). The capacity theory will then show that there are infinitely many NTNRPOs (\textbf{Theorem \ref{Thm:ManyRPOLocal}}). Combining results in chapter \ref{sec:2} and in chapter \ref{sec:3}, the main result(\textbf{Theorem \ref{Thm:Main}}) is thus proved;
\item In chapter \ref{sec:4}, we add discrete symmetry constraints to get NTNRPOs of special symmetric configuration.
\end{itemize}

We have resumed necessary technical background and some details of proofs in the appendix. 
%%%%%%%%%%%%%%%%%%%%%%%% Section II %%%%%%%%%%%%%%%%%%%%%%%%%%%
\section{Normalised Relative Equilibria of H}
\label{sec:2}
%%%%%%%%%%%%%%%%%%%%%%%% Section 2.1 %%%%%%%%%%%%%%%%%%%%%%%%%%
\subsection{General Positive Vorticities}
\label{subsec:2.1}
Before we proceed to study NTNRPOs, we first need to have some preparation for properties of the normalised relative equilibria of $H$. In this chapter, we study the normalised relative equilibria of H, with an emphasis on their energy levels. First note that the mutual distances between vortices in a normalised relative equilibrium configuration cannot be too small. More precisely: 
\begin{Lem}
\label{Lem:MinDist}
For $\Gamma_i \in \mathbb{R}^{+}$, there exists constant $\epsilon(\mathbf{\Gamma})$ which depends only on the vorticities $\mathbf{\Gamma} = (\Gamma_1,\Gamma_2,..\Gamma_N), 1\leq i\leq N$, s.t.
\begin{align*}
\inf_{\substack{\textbf{z}\in \mathcal{Z}_{1}\\ 1\leq i < j\leq N}} |z_i-z_j|^2 >\epsilon > 0
\end{align*}
\end{Lem}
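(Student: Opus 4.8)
The plan is to argue by contradiction: a collapsing cluster of equal-sign vortices would force a virial quantity that, because all vorticities are positive, is strictly positive, to vanish. First I would record two consequences of being a normalised relative equilibrium. Rewriting the relation \eqref{E2} in complex coordinates exactly as in the proof of Proposition \ref{Pro:NRE} gives, for every $i$, an identity of the form
\[
\sum_{j\neq i}\Gamma_j\,\frac{z_i-z_j}{|z_i-z_j|^2}=\kappa\,z_i ,
\]
where $\kappa$ is a fixed multiple of $L$ whose precise value is irrelevant. Second, since $I(\textbf{z})=\sum_i\Gamma_i|z_i|^2=1$ and every $\Gamma_i>0$, each vortex satisfies $|z_i|\le \Gamma_{\min}^{-1/2}=:R$, so every configuration in $\mathcal{Z}_1$ lies in a ball of radius $R$ depending only on $\mathbf{\Gamma}$.

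Suppose the infimum were $0$. Then there is a sequence $\textbf{z}^m\in\mathcal{Z}_1$ with $\min_{i<j}|z_i^m-z_j^m|\to0$. As all $\textbf{z}^m$ lie in the fixed ball of radius $R$, after passing to a subsequence every mutual distance $|z_i^m-z_j^m|$ converges in $[0,\infty)$. Declare $i\sim j$ when this limit is $0$; since a chain of vanishing distances forces the distance of its endpoints to vanish, $\sim$ partitions $\{1,\dots,N\}$ into clusters on each of which all internal distances tend to $0$, while distances between distinct clusters stay bounded below by some $\delta>0$. Because the minimal distance tends to $0$, at least one cluster $V$ has $|V|\ge2$; write $\rho_m=\operatorname{diam}(V)\to0$ and let $p_m=\bigl(\sum_{i\in V}\Gamma_i z_i^m\bigr)/\bigl(\sum_{i\in V}\Gamma_i\bigr)$ be its weighted centre, so that $|z_i^m-p_m|\le\rho_m$ for $i\in V$.

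For $i\in V$ I would split the relative-equilibrium identity into intra- and extra-cluster parts,
\[
\sum_{\substack{j\in V\\ j\neq i}}\Gamma_j\frac{z_i-z_j}{|z_i-z_j|^2}=\kappa\,z_i-\sum_{j\notin V}\Gamma_j\frac{z_i-z_j}{|z_i-z_j|^2},
\]
multiply by $\Gamma_i(\bar z_i-\bar p_m)$, sum over $i\in V$, and take real parts. On the left, pairing the terms indexed by $(i,j)$ and $(j,i)$ makes the contribution of the constant $\bar p_m$ cancel by antisymmetry, while the remaining part collapses, via $(z_i-z_j)(\bar z_i-\bar z_j)=|z_i-z_j|^2$, to the exact algebraic value $\sum_{i<j,\,i,j\in V}\Gamma_i\Gamma_j=L_V$, \emph{independently of the cluster's internal geometry}. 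On the right, every factor $(\bar z_i-\bar p_m)$ has modulus at most $\rho_m$, while $\kappa z_i$ is bounded by $|\kappa| R$ and the extra-cluster sum is bounded by $(2R/\delta)\sum_j\Gamma_j$ (the external distances exceed $\delta$); hence the right-hand side is $O(\rho_m)\to0$. Passing to the limit yields $L_V=0$, contradicting $L_V>0$ under \eqref{Hyp:positive}. Therefore the infimum is strictly positive, and since $R$, $\delta$ and the identity depend only on $\mathbf{\Gamma}$, so does the bound $\epsilon(\mathbf{\Gamma})$.

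The delicate point is the cluster bookkeeping: a crude estimate that merely bounds the singular force on the closest pair fails, because inside a collapsing cluster the large forces \emph{cancel} rather than dominate. The argument works precisely because one never resolves the possibly multi-scale internal structure of $V$ — the weighted-centre virial converts the intra-cluster forces into the scale-independent constant $L_V$ exactly, so all that must be controlled is that $V$ is separated from the remaining vortices by a definite gap $\delta$, guaranteed by taking $V$ to be a full cluster of the partition. I expect the only genuine care needed in the write-up is verifying that this partition is well defined along the subsequence and that the extra-cluster distances are uniformly bounded below.
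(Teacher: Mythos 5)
Your proof is correct, and while it shares the paper's overall strategy (contradiction via a collapsing cluster $V$, ending in $L_V=0$ versus $L_V>0$ under \eqref{Hyp:positive}), the decisive step is executed by a genuinely different route. The paper argues dynamically: it observes that the vorticity centre of the cluster itself rotates uniformly with the common angular velocity, deduces from this --- after passing to limits, which tacitly requires the vortices outside $V$ to converge to positions away from the collision point --- that the intra-cluster gradient $\nabla H_V(\textbf{z}^k_V)$ tends to $0$, and then contradicts the Euler-type identity $\nabla H_V(\textbf{z}^k_V)\,\textbf{z}^k_V=-\frac{1}{2\pi}L_V$. You replace this two-step limit argument by a single static estimate: taking moments of the relative-equilibrium equation about the cluster's weighted centre $p_m$ makes the intra-cluster part collapse \emph{exactly} to $L_V$ (your antisymmetric pairing is precisely the translation invariance $\sum_{i\in V}\nabla_{z_i}H_V=0$ combined with the same Euler identity), while the rotation term and the extra-cluster forces are merely bounded and get multiplied by factors of size $\rho_m$, giving $L_V=O(\rho_m)\to 0$. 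Your version is more quantitative and needs no convergence of the external vortices (only the inter-cluster gap $\delta$, which the definition of the partition provides); the paper's version isolates the conceptual fact --- uniform rotation of cluster centres --- that the author explicitly presents as the novelty of this alternative proof. Two cosmetic points: your closing claim that $\delta$ depends only on $\mathbf{\Gamma}$ is not literally true ($\delta$ is attached to the particular contradictory sequence), but this is harmless since the argument is by contradiction and the infimum itself is a function of $\mathbf{\Gamma}$ alone; and the extra-cluster bound should be $\frac{1}{\delta}\sum_j\Gamma_j$ rather than $\frac{2R}{\delta}\sum_j\Gamma_j$, an unimportant constant.
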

\begin{Rmk}
As the relative equilibria are rigid body motions, we have dropped the dependence of time of $\textbf{z}$ to simplify the discussion. 
\end{Rmk}
This result first appears in the work of O'Neil \cite{o1987stationary} and has been reproved recently by Roberts \cite{roberts2017morse} using a renormalisation argument, followed by a detailed discussion on Morse index of relative equilibria. We here give an alternative proof by the  observation that for a relative equilibirum, the vorticity center of a given cluster also rotates uniformly. 
\begin{proof}:
Denote
\begin{align*}
m(\textbf{z}) = \inf_{ 1\leq i < j\leq N} |z_i-z_j|^2
\end{align*} 
Suppose to the contrary that $\textbf{z}^k$ is a sequence of relative equilibria whose mutual distances s.t. 
$\lim_{k\rightarrow \infty}m(\textbf{z}^k) = 0$. Then by consecutively passing to subsequence if necessary, we may suppose that there exists an sub-index set $V\subset\{1,2,..,N\}$ s.t. $z^k_i \rightarrow z^*, \forall i\in V$.  Denote $\textbf{z}_V$ as the vector of vortices with index in V. The Hamiltonian could be separated into two parts, the interactions between vortices in V and otherwise. Let $H(\textbf{z}) = H_V(\textbf{z}) + H_{V^c}(\textbf{z})$, where
\begin{align}
H_{V}(\textbf{z})&= -\frac{1}{4\pi}\sum_{\substack{i< j \\ i,j\in V}}\Gamma_{i}\Gamma_{j}\log{|z_i -z_j|^2} \\
H_{V^c}(\textbf{z})&=-\frac{1}{4\pi}\sum_{\substack{i< j \\ (i,j)\notin V\times V}}\Gamma_{i}\Gamma_{j}\log{|z_i -z_j|^2}
\end{align}
It follows that 
$\displaystyle \nabla H(\textbf{z}^k) \textbf{z}^k = -\frac{1}{2\pi}L, \text{while } \nabla H_V(\textbf{z}_V^k) \textbf{z}_V^k = -\frac{1}{2\pi}L_V$. 
Observe that $c^k_V$, the vorticity centre of $\textbf{z}^k_V$, also follows a uniform rotation with the vortices. As a result, 
\begin{align}
\dot{c}^k_V &= \frac{\sum_{i\in V}\Gamma_i\dot{z}^k_i}{\sum_{i\in V} \Gamma_i} =\mathbb{J} \frac{\omega}{2} c^k_V\\
\Gamma_i\dot{z}^k_i &= \mathbb{J}(\nabla_{z_i} H_V(\textbf{z}) +  \nabla_{z_i} H_{V^c}(\textbf{z}) ) = \mathbb{J} \Gamma_i \frac{\omega}{2} z_i^k,\quad i\in V
\end{align}
Since $\lim_{k\rightarrow \infty}c^k_V = \lim_{k\rightarrow \infty} z^k_i  = z^*,\forall i\in V $,
We see that $\displaystyle \lim_{k\rightarrow \infty}\nabla H_V(\textbf{z}_V^k) = \textbf{0}$. But we know already that $\displaystyle \nabla H_V(\textbf{z}_V^k) \textbf{z}_V^k = -\frac{1}{2\pi}L_V$. As $|z^i_V|$ is bounded (since $\vz^k \in \mathcal{Z}_1(H)$), this implies that $L_V =0$, which contradicts the fact that $\Gamma_i > 0,\forall i\in V$. As a result, such sequence $\textbf{z}^k$ does not exist. The lemma is proved.
\end{proof}
Lemma \ref{Lem:MinDist} tells us that the relative equilibria are isolated from the diagonals, where collision happens and singularity rises. With this result in hand, we will study the distribution of energy levels on which normalised relative equilibria exist. For a subeset $\mathcal{A}\subset \mathbb{R}$, we denote by $\mu(\mathcal{A})$ its Lebesgue measure. Roughly speaking, we show that $\mathcal{H}_{1}$ is somehow a small subset of $\mathbb{R}$. 
\begin{Thm}
\label{Thm:ClosedNull}
For $\Gamma_i \in \mathbb{R}^{+},\forall 1\leq i \leq N$, $\mathcal{H}_{1}$ is a closed set in $\mathbb{R}$. Moreover $\mu(\mathcal{H}_{1}) = 0$.
\end{Thm}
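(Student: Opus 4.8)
The plan is to recognise the whole family $\mathcal{Z}_{1}$ as the critical set of a \emph{single} smooth function, and then let Sard's theorem do the work. The key point is that in Proposition \ref{Pro:NRE} the Lagrange multiplier is not free: condition \eqref{E2} reads $\nabla H(\textbf{z}) = -\frac{L}{4\pi}\nabla I(\textbf{z})$ with the \emph{same} constant $-\frac{L}{4\pi}$ for every normalised relative equilibrium. So I would introduce, on the collision-free open set $\Omega = \{\textbf{z}\in\mathbb{R}^{2N}: z_i\neq z_j \text{ for } i\neq j\}$, the $C^{\infty}$ function
\[
F := H + \tfrac{L}{4\pi} I .
\]
Then $\nabla F(\textbf{z})=0$ is exactly \eqref{E2}, so by Proposition \ref{Pro:NRE} we have $\mathcal{Z}_{1} = \operatorname{Crit}(F)$ (intersected with $\Omega$, which loses nothing since relative equilibria are collision-free). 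Moreover every $\textbf{z}\in\mathcal{Z}_{1}$ satisfies $I(\textbf{z})=1$, whence $F(\textbf{z}) = H(\textbf{z}) + \frac{L}{4\pi}$ on $\mathcal{Z}_{1}$. Consequently
\[
\mathcal{H}_{1} = H(\mathcal{Z}_{1}) = F\bigl(\operatorname{Crit}(F)\bigr) - \tfrac{L}{4\pi}.
\]

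For the measure statement I would simply apply Sard's theorem to $F\colon\Omega\subset\mathbb{R}^{2N}\to\mathbb{R}$. Since $F$ is $C^{\infty}$ (in particular of class $C^{2N}$, which is the regularity Sard requires for a map from a $2N$-dimensional domain into $\mathbb{R}$), the set of critical values $F(\operatorname{Crit}(F))$ has Lebesgue measure zero in $\mathbb{R}$. Translation by the constant $-\frac{L}{4\pi}$ preserves Lebesgue measure, so the displayed identity gives $\mu(\mathcal{H}_{1})=0$ at once.

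For closedness I would deduce compactness of $\mathcal{Z}_{1}$ from Lemma \ref{Lem:MinDist} and then transport it through $H$. Every $\textbf{z}\in\mathcal{Z}_{1}$ obeys $\sum_i \Gamma_i|z_i|^2 = I(\textbf{z}) = 1$ with all $\Gamma_i>0$, so $|z_i|^2 \le 1/\min_i\Gamma_i$ and $\mathcal{Z}_{1}$ is bounded. By Lemma \ref{Lem:MinDist} the mutual distances stay uniformly $\ge\sqrt{\epsilon}>0$, so the closure of $\mathcal{Z}_{1}$ still avoids the collision locus and remains inside $\Omega$; on $\Omega$ the condition $\nabla F=0$ is closed, being the vanishing of a continuous map, so $\mathcal{Z}_{1}$ is closed in $\mathbb{R}^{2N}$ and hence compact. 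As $H$ is continuous on $\Omega$, the image $\mathcal{H}_{1}=H(\mathcal{Z}_{1})$ is compact, in particular closed. (Equivalently, one extracts from $h_k=H(\textbf{z}^k)\to h$ a subsequence $\textbf{z}^k\to\textbf{z}^{*}\in\mathcal{Z}_{1}$ and passes to the limit by continuity.)

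The only genuinely conceptual step is the first one: observing that the multiplier is a fixed constant across the entire family, which is what collapses $\mathcal{Z}_{1}$ into $\operatorname{Crit}(F)$ for one scalar $F$ and makes Sard directly applicable in $\mathbb{R}$. If the multiplier varied over $\mathcal{Z}_{1}$ one could not package the energies as critical values of a single function, and the measure-zero conclusion would be far less immediate; here the homogeneity identities $\nabla H(\textbf{z})\!\cdot\!\textbf{z}=-\frac{L}{2\pi}$ and $\nabla I(\textbf{z})\!\cdot\!\textbf{z}=2I(\textbf{z})$ behind Proposition \ref{Pro:NRE} are precisely what pins the multiplier down. Everything after that reduction is routine: verifying the smoothness needed for Sard, and combining the bound from $I=1$ with the uniform separation of Lemma \ref{Lem:MinDist} to obtain the compactness used for closedness.
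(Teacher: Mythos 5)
Your proposal is correct and takes essentially the same approach as the paper: the paper applies Sard's theorem to $f = 2H + \frac{L}{2\pi}I$ (which is exactly $2F$), using Proposition \ref{Pro:NRE} to identify $\mathcal{Z}_1$ with the critical set of a single function, and then translates critical values into energy values via $I=1$. For closedness the paper runs the same ingredients (boundedness from $I=1$, uniform separation from collisions via Lemma \ref{Lem:MinDist}, continuity of $H$ and $\nabla H$) as a direct sequential argument rather than through your compactness-of-$\mathcal{Z}_1$ phrasing, a purely cosmetic difference.
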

\begin{proof}:
Suppose given a sequence of real numbers $h^k \in \mathcal{H}_{1}$ s.t. $\lim_{k\rightarrow \infty } h^k \rightarrow h^{*}\in \mathbb{R}$. Then   by definition of $\mathcal{H}_{1}$, there exists a sequence of normalised relative equilibria $\textbf{z}^k \in \mathcal{Z}_{1}$ s.t. 
\begin{align}
H(\textbf{z}^k) = h^k \rightarrow h^{*}
\end{align}
Since $I(\textbf{z}^k) = 1$, $\textbf{z}^k \in \mathbb{R}^{2N}$ is a bounded sequence, hence $\displaystyle \textbf{z}^k \xrightarrow{k\rightarrow \infty} \textbf{z}^{*}$. Thanks to lemma \ref{Lem:MinDist}, we see that points in $\mathcal{Z}_{1}$ are isolated from collision, hence $H$ is smooth at these points. As a result
\begin{align}
\nabla H(\textbf{z}^{*})&= \lim_{k\rightarrow\infty} \nabla H(\textbf{z}^k) = \lim_{k\rightarrow \infty}  -\frac{L}{4\pi} \nabla I(\textbf{z}^k(t)) = -\frac{L}{4\pi} \nabla I(\textbf{z}^{*}) \\
I(\textbf{z}^*) &= \lim_{k\rightarrow \infty} I(\textbf{z}^k) = 1,\\ H(\textbf{z}^*) &= \lim_{k\rightarrow \infty} H(\textbf{z}^k) = \lim_{k\rightarrow\infty} h^k = h^{*}\end{align}
In other words, $z^{*} \in \mathcal{Z}_{1}$ and $H(z^*) = h^{*}$. Hence $\mathcal{H}_{1}$ is a closed set.\\
Next, consider the function 
\begin{align*}
&f: \mathbb{R}^{2N} \rightarrow \mathbb{R}\\
&f(\textbf{z}) = 2H(\textbf{z}) + \frac{L}{2\pi} I(\textbf{z})
\end{align*}
Now by proposition \ref{Pro:NRE} $\nabla f(\textbf{z}) = 0$ implies that $\textbf{z}\in \mathcal{Z}_{1}$, which is isolated from collision. Hence Sard's theorem applies and $f(\mathcal{Z}_{1})$ is a null set. But on $\mathcal{Z}_{1}$, one has $I(\textbf{z}) = 1$, hence $\mathcal{H}_1 = H(\mathcal{Z}_{1})$ is a null set too. The theorem is thus proved.
\end{proof}
One important consequence of theorem 2 is the following corollary:
\begin{Col}
\label{Col:OpenDense}
$\mathcal{H}_0  \setminus	 \mathcal{H}_1$ is an open dense subset of $\mathcal{H}_0$.
\end{Col}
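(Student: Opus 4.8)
The plan is to derive everything from Theorem \ref{Thm:ClosedNull}, which gives that $\mathcal{H}_1$ is closed in $\mathbb{R}$ with $\mu(\mathcal{H}_1)=0$, together with the elementary inclusion $\mathcal{H}_1\subset\mathcal{H}_0$ (a normalised relative equilibrium is in particular a normalised orbit). Openness is then immediate: since $\mathcal{H}_1$ is closed in $\mathbb{R}$, the set $\mathbb{R}\setminus\mathcal{H}_1$ is open, hence $\mathcal{H}_0\setminus\mathcal{H}_1=\mathcal{H}_0\cap(\mathbb{R}\setminus\mathcal{H}_1)$ is open in the subspace topology of $\mathcal{H}_0$. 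The real content lies in density.

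For density I would first establish the structural claim that $\mathcal{H}_0$ is a nondegenerate interval. Because $P,Q,I$ are first integrals, a centred collision-free configuration with $I=1$ lies on an orbit that stays centred with $I=1$; thus $\mathcal{H}_0=H(\Sigma)$, where
\[
\Sigma=\{\,\textbf{z}\in\mathbb{R}^{2N}: z_i\neq z_j\ (i<j),\ P(\textbf{z})=Q(\textbf{z})=0,\ I(\textbf{z})=1\,\}.
\]
The constraints $P=Q=0$ cut out the centred subspace, on which $I=1$ defines an ellipsoid, i.e. a sphere of dimension $2N-3$; a dimension count shows each diagonal $\{z_i=z_j\}$ meets this sphere in a subset of codimension two. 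For $N\ge 3$ the sphere has dimension at least three, so removing a codimension-two locus leaves it connected, whence $\Sigma$ is connected and $\mathcal{H}_0=H(\Sigma)$ is an interval. Moreover $H\to+\infty$ near collisions while $H$ is bounded below on $\Sigma$ (since $I=1$ bounds all $|z_i|$, hence all mutual distances), so $\mathcal{H}_0$ is a nondegenerate interval, unbounded above.

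Granting this, density follows by a measure argument. Suppose for contradiction that $\mathcal{H}_0\setminus\mathcal{H}_1$ is not dense in $\mathcal{H}_0$; then there is a nonempty relatively open $V\subset\mathcal{H}_0$ with $V\subset\mathcal{H}_1$. Writing $V=W\cap\mathcal{H}_0$ with $W\subset\mathbb{R}$ open and choosing $h\in V$, the interval property of $\mathcal{H}_0$ forces $W\cap\mathcal{H}_0$ to contain a nondegenerate subinterval around $h$ (one-sided if $h$ is the left endpoint), which has positive Lebesgue measure. But $V\subset\mathcal{H}_1$ and $\mu(\mathcal{H}_1)=0$, a contradiction. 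Hence $\mathcal{H}_0\setminus\mathcal{H}_1$ meets every nonempty relatively open subset of $\mathcal{H}_0$, i.e. it is dense.

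The main obstacle is precisely the structural step that $\mathcal{H}_0$ is an interval: this is what excludes the degenerate scenario in which some point of $\mathcal{H}_1$ is isolated within $\mathcal{H}_0$, against which closedness and nullity of $\mathcal{H}_1$ alone would be powerless (for instance $\mathcal{H}_0=\{0\}\cup[1,2]$, $\mathcal{H}_1=\{0\}$ would defeat density). Making this rigorous reduces to verifying that $\Sigma$, the normalised collision-free configuration space, is connected, i.e. that deleting the codimension-two collision locus from the sphere $\{P=Q=0,\,I=1\}$ does not disconnect it — which is exactly where the standing assumption $N\ge 3$ is used.
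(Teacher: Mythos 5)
Your proof is correct, and it is worth comparing with the paper's, which disposes of the corollary in a single line: ``Immediately from theorem \ref{Thm:ClosedNull}.'' Openness is indeed immediate, as you say, so the two arguments agree there. For density, however, you have put your finger on a genuine logical gap in that one-line deduction: closedness and Lebesgue-nullity of $\mathcal{H}_1$ alone do not force $\mathcal{H}_0\setminus\mathcal{H}_1$ to be dense in $\mathcal{H}_0$ (your example $\mathcal{H}_0=\{0\}\cup[1,2]$, $\mathcal{H}_1=\{0\}$ settles that), so some structural property of $\mathcal{H}_0$ --- left implicit by the paper --- must be invoked. Your additional step, showing that $\mathcal{H}_0=H(\Sigma)$ is a nondegenerate interval because the normalised collision-free sphere $\Sigma$ is connected, $H$ is bounded below on $\Sigma$ (here positivity of the vorticities, hypothesis \eqref{Hyp:positive}, is essential), and $H\to+\infty$ at the collision locus, is exactly the missing ingredient, and your measure argument then closes the proof; this is what a complete version of the paper's proof has to contain. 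One correction on attribution: $N\geq 3$ is not really used for connectedness --- removing a closed set of codimension at least two from a connected manifold never disconnects it, and for $N=2$ the collision locus does not even meet the sphere, since $z_1=z_2$ together with $\Gamma_1 z_1+\Gamma_2 z_2=0$ forces $z_1=z_2=0$, incompatible with $I=1$. What $N\geq3$ (nowhere stated explicitly in the paper, though implicit throughout, e.g.\ in the reduced space $\mathbb{CP}^{N-2}$) actually buys is nondegeneracy: for $N=2$ the sphere $\Sigma$ is a circle on which $H$ is constant, every normalised orbit is a relative equilibrium, $\mathcal{H}_0=\mathcal{H}_1$ is a single point, and the corollary itself fails. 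So the hypothesis enters through the nonemptiness of the collision locus on $\Sigma$, which makes $H$ unbounded above and the interval $\mathcal{H}_0$ nondegenerate, rather than through connectedness.
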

\begin{proof}:
Immediately from theorem \ref{Thm:ClosedNull}.
\end{proof}
\subsection{Rational Positive Vorticities And Beyond}
\label{subsec:2.2}
So far corollary \ref{Col:OpenDense} is sufficient for our further need. But when vorticities are positive rational numbers we can do even more. Actually, if $\Gamma_i \in \mathbb{Q}^+$, we can even prove that there are only finitely many energy levels on which a normalised equilibrium exists. The proof of theorem \ref{Thm:RationalFinite} below depends on a transformation of Hamiltonian and some elimination theory in algebraic geometry. We have resumed the detailed proof in Appendix \ref{appendix:A}. 
\begin{Thm}
\label{Thm:RationalFinite}
If $\text{ }\Gamma_i \in \mathbb{Q}^+, 1\leq i \leq N$, then $\mathcal{H}_{1}$ is a finite set. 
\end{Thm}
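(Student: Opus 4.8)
The plan is to exploit the rationality of the vorticities to turn the transcendental energy into an \emph{algebraic} function and then bound the number of its values by elimination theory. Write $\Gamma_i = p_i/q$ with a common denominator $q\in\mathbb{Z}_{>0}$ and $p_i\in\mathbb{Z}_{>0}$, set $D=q^2$, and put $n_{ij}=D\,\Gamma_i\Gamma_j=p_ip_j\in\mathbb{Z}_{>0}$. For $\vz\in\mathcal{Z}_{1}$ define
\[
W(\vz)=\prod_{1\le i<j\le N}|z_i-z_j|^{2n_{ij}},
\]
which is a genuine polynomial in the coordinates of $\vz$ with integer coefficients. From the definition of $H$ one computes $\log W(\vz)=-4\pi D\,H(\vz)$, so $h\mapsto e^{-4\pi D h}$ is a strictly monotone bijection carrying $\mathcal{H}_{1}=H(\mathcal{Z}_{1})$ onto $W(\mathcal{Z}_{1})$. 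Hence it suffices to prove that $W$ takes only finitely many values on $\mathcal{Z}_{1}$. This is the ``transformation of the Hamiltonian'': the rationality is exactly what makes the exponents $n_{ij}$ integral and $W$ polynomial, and for irrational vorticities $W$ would be transcendental, so the algebraic machinery below would not apply.

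Next I would algebraize the equilibrium conditions. By Proposition \ref{Pro:NRE}, $\vz\in\mathcal{Z}_{1}$ iff $\nabla H(\vz)=-\tfrac{L}{4\pi}\nabla I(\vz)$, together with $P(\vz)=Q(\vz)=0$ and $I(\vz)=1$. By Lemma \ref{Lem:MinDist} every point of $\mathcal{Z}_{1}$ stays a fixed distance from the collision set, so I may clear the denominators $|z_i-z_j|^2$ in the equilibrium relation without losing or creating physical solutions, obtaining a polynomial system $E_1=\dots=E_m=0$ with rational coefficients. Adjoining a new variable $w$ and the relation $w-W(\vz)=0$, I consider the complex affine variety $V\subset\mathbb{C}^{2N}_{\vz}\times\mathbb{C}_w$ cut out by these polynomials together with $P=Q=0$ and $I-1=0$, and project it to the $w$-line. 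By elimination theory the image is a constructible subset of $\mathbb{C}$, hence either finite or cofinite; the whole point is to rule out the cofinite (dominant) case, i.e.\ to show that the projection to $w$ is not dominant.

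The finiteness in the fibre direction comes from a constancy argument. Differentiating $W$ directly gives the identity $\nabla W=-4\pi D\,W\,\nabla H$ off the collision set; substituting the equilibrium relation $\nabla H=-\tfrac{L}{4\pi}\nabla I$ yields
\[
\nabla W = D\,L\,W\,\nabla I \qquad\text{on } V.
\]
Since $I\equiv 1$ on $V$, every tangent vector $\xi$ to $V$ satisfies $\langle\nabla I,\xi\rangle=0$, whence $\langle\nabla W,\xi\rangle=0$; that is, $dW$ annihilates the tangent spaces of $V$. Therefore $W$ is locally constant on the smooth locus of $V$ and takes a single value on each irreducible component. As $V$ has only finitely many irreducible components, the projection to $w$ is not dominant, its image is finite, the elimination ideal in $\mathbb{Q}[w]$ is nonzero, and so $W(\mathcal{Z}_{1})$ and hence $\mathcal{H}_{1}$ are finite.

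The hard part is the passage through the complexification. Clearing denominators can introduce spurious components of $V$ lying on the collision locus $\{|z_i-z_j|^2=0\}$, where $W$ vanishes identically and the identity $\nabla W=-4\pi D\,W\,\nabla H$ is meaningless; these must be separated out, and one has to check that the components meeting the genuine (real, collision-free) relative equilibria are precisely the ones governed by the constancy argument. Equivalently, in the elimination-theoretic formulation one must verify that on each relevant component the generic fibre of the projection to $w$ has the same dimension as the component, so that the image is $0$-dimensional. Making this bookkeeping precise — isolating the physical components from the collision components, and keeping all coefficients rational so the elimination is effective — is the technical heart of the proof, and is where the detailed computation of Appendix \ref{appendix:A} is required.
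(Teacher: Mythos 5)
Your proposal is correct, but it takes a genuinely different route through the algebra than the paper's Appendix \ref{appendix:A}, so a comparison is worthwhile. The common core is the first move: exponentiate the Hamiltonian so that rational vorticities produce a \emph{polynomial} energy (your $W=\prod_{i<j}|z_i-z_j|^{2n_{ij}}$ is, up to the power, the paper's $G=e^{-2\pi H}$ after its vorticity rescaling $\Gamma_i\mapsto 2K\Gamma_i$), and then invoke algebraic finiteness of its value set. The divergence is in how the normalisation enters. The paper's non-dominance certificate is Lemma \ref{Lem:smoothpoint}, which requires the defining equations of the algebraic set to be literally the critical-point equations $\nabla g=0$ of a single polynomial $g$ built from $G$ and $I$; this is incompatible with adjoining $I-1=0$ as you do, so the paper instead fixes the \emph{angular velocity} ($\omega=1$, i.e.\ the set $\mathcal{Z}^{2\pi}(G)$), proves $\mathcal{G}^{2\pi}$ finite via Lemmas \ref{Lem:Dominating} and \ref{Lem:smoothpoint}, and then needs a second, separate step --- the scaling symmetry of Lemma \ref{Lem:Scaling} together with $I=\omega^{2/(2-L)}$ from Lemma \ref{Lem:relationGI} --- to transfer finiteness to the normalised family by contradiction. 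You dispense with that detour: adjoining $I-1=0$ forces $dI$ to annihilate every Zariski tangent space of $V$, and your chain-rule identity $\nabla W=DLW\,\nabla I$ then forces $dW$ to annihilate it too, so $W$ is constant on each irreducible component and the image is finite outright. This also makes your appeal to the finite/cofinite dichotomy redundant: constancy on finitely many components \emph{is} the finiteness, so you have effectively replaced the paper's two black boxes (Mumford's smooth-point lemma and the Albouy--Kaloshin dichotomy) by the elementary fact that a regular function with identically vanishing differential on an irreducible variety is constant. Moreover, the \enquote{technical heart} you defer is easier than you fear: any irreducible component of $V$ contained in a collision locus $\{|z_i-z_j|^2=0\}$ carries $W\equiv 0$, since the polynomial $|z_i-z_j|^2$ divides $W$; on any other component the collision-free points form a dense Zariski-open subset on which your tangency argument applies; so constancy holds on every component with no further bookkeeping, and no separation of \enquote{physical} from \enquote{spurious} components is needed. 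What the paper's route buys is that it never touches tangent spaces of a possibly singular, reducible complexification (non-dominance is immediate from the shape of its equations) and it yields the intermediate statement that period-$2\pi$ relative equilibria attain only finitely many energies; what your route buys is a shorter, more self-contained argument in which the normalisation $I=1$ is handled exactly where it lives.
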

\begin{proof}:
See appendix \ref{appendix:A}.
\end{proof}
Theorem \ref{Thm:RationalFinite} is interesting in its own right, although we still do not know whether the number of normalised relative equilibria configurations are finite or not. Actually, from the proof in appendix \ref{appendix:A}, we see that $\Gamma_i \in \mathbb{Q}^+$ is sufficient but not necessary. More generally, if $\displaystyle \frac{\Gamma_i}{\Gamma_j} \in \mathbb{Q}^{+},\forall 1\leq i< j\leq N$, the result will hold. In particular, this is case for identical vorticities:\\
\begin{Col}
\label{Col:IdenticalFinite}
If $\text{ }\Gamma_i = c\in \mathbb{R} \setminus \{0\}, 1\leq i \leq N$, then $\mathcal{H}_{1}$ is a finite set. 
\end{Col}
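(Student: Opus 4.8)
The plan is to reduce the case of identical vorticities $\Gamma_i = c$ to the integer case $\Gamma_i = 1$, which is already covered by Theorem \ref{Thm:RationalFinite}, by rescaling the plane. First I would dispose of negative $c$: if $c < 0$ then $I(\vz) = c\sum_i |z_i|^2 \le 0$ for every configuration, so the normalisation $I(\vz) = 1$ can never hold; hence $\mathcal{Z}_1 = \emptyset$ and $\mathcal{H}_1 = \emptyset$ is trivially finite. So assume $c > 0$. Write $H_c, I_c$ and $H_1, I_1$ for the Hamiltonian $H$ and the first integral $I$ attached to vorticities $c$ and $1$ respectively. Since each product $\Gamma_i\Gamma_j$ carries a factor $c^2$ and each weight in $I$ a factor $c$, one has the global identities $H_c = c^2 H_1$ and $I_c = c\,I_1$ on $\mathbb{R}^{2N}$.

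The core of the argument is the homothety $\Phi(\vz) = \sqrt{c}\,\vz$, which I claim carries the normalised relative equilibria $\mathcal{Z}_1^{(c)}$ of the $c$-problem bijectively onto those $\mathcal{Z}_1^{(1)}$ of the unit problem. The centering condition $\sum_i z_i = 0$ is scale-invariant; the normalisations agree since $I_1(\sqrt{c}\,\vz) = c\,I_1(\vz) = I_c(\vz)$; and the equilibrium equation $\nabla H = \tfrac{\omega}{2}\nabla I$ of Proposition \ref{Pro:NRE} is preserved. For the last point, the identities $H_c = c^2 H_1$, $I_c = c\,I_1$ reduce the $c$-equation to the unit equation at the same configuration (with $\omega$ rescaled), while the homothety $\vz \mapsto \sqrt{c}\,\vz$ sends $\nabla H_1 \mapsto c^{-1/2}\nabla H_1$ and $\nabla I_1 \mapsto c^{1/2}\nabla I_1$, again leaving the unit equation intact up to a rescaling of $\omega$. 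Thus $\Phi$ identifies the two equilibrium sets configuration by configuration.

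It then remains to track the energy. The identity $\log|\sqrt{c}\,(z_i - z_j)|^2 = \log c + \log|z_i-z_j|^2$ gives $H_1(\sqrt{c}\,\vz) = H_1(\vz) - \tfrac{1}{4\pi}\binom{N}{2}\log c$, and combining this with $H_c = c^2 H_1$ yields, for every $\vz \in \mathcal{Z}_1^{(c)}$ with image $w = \Phi(\vz)$,
\begin{align*}
H_c(\vz) \;=\; c^2\,H_1(w) \;+\; \frac{c^2}{4\pi}\binom{N}{2}\log c .
\end{align*}
Hence $\mathcal{H}_1^{(c)} = \psi\bigl(\mathcal{H}_1^{(1)}\bigr)$, where $\psi(h) = c^2 h + \tfrac{c^2}{4\pi}\binom{N}{2}\log c$ is an affine bijection of $\mathbb{R}$. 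Since $1 \in \mathbb{Q}^+$, Theorem \ref{Thm:RationalFinite} makes $\mathcal{H}_1^{(1)}$ finite, and an affine image of a finite set is finite; therefore $\mathcal{H}_1^{(c)}$ is finite, which is the assertion.

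The step I expect to be the main obstacle is precisely the equivariance claim for $\Phi$ in the second paragraph: one must check that the relative-equilibrium condition is insensitive both to the overall rescaling of the vorticities and to the position homothety $\sqrt{c}$, each being absorbed into a change of the determined angular velocity $\omega$, so that $\Phi$ genuinely matches $\mathcal{Z}_1^{(c)}$ with $\mathcal{Z}_1^{(1)}$. Everything else---the sign reduction, the logarithmic energy shift, and the final finiteness conclusion---is routine once this equivariance is confirmed.
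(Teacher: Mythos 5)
Your proof is correct. All the computational ingredients check out: $H_c=c^2H_1$ and $I_c=cI_1$ hold globally; for $w=\sqrt{c}\,\vz$ one has $\nabla H_1(w)=c^{-1/2}\nabla H_1(\vz)$ and $\nabla I_1(w)=c^{1/2}\nabla I_1(\vz)$, so that, using characterization \eqref{E2} of Proposition \ref{Pro:NRE} together with $L_c=c^2L_1$, the equation $\nabla H_c(\vz)=-\frac{L_c}{4\pi}\nabla I_c(\vz)$ is literally the same equation as $\nabla H_1(w)=-\frac{L_1}{4\pi}\nabla I_1(w)$, and $I_1(w)=I_c(\vz)$, so $\Phi$ is indeed a bijection between the two sets of normalised relative equilibria; the affine energy relation and the final finiteness conclusion follow. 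Your route, however, is genuinely different in execution from the paper's. The paper obtains the corollary by remarking that the proof of Theorem \ref{Thm:RationalFinite} in Appendix \ref{appendix:A} --- which passes to the exponential Hamiltonian $G=\exp(-2\pi H)$, rescales the \emph{vorticities} by a common integer factor, and invokes elimination theory --- goes through whenever the ratios $\Gamma_i/\Gamma_j$ are rational, identical vorticities being a special case. You instead keep Theorem \ref{Thm:RationalFinite} as a black box (applied to $\Gamma_i=1\in\mathbb{Q}^+$) and absorb the common factor $c$ by rescaling \emph{positions}, tracking the energy through an explicit affine bijection $\psi$. This buys three things: you never reopen the algebraic-geometry machinery of the appendix; you make explicit the normalisation homothety that the paper's appendix glosses over (its assertion $\mathcal{Z}_1(\tilde{G})=\mathcal{Z}_1(G)$ ignores that rescaling vorticities changes the constraint $I=1$, exactly the point your $\sqrt{c}$ handles); and you settle the case $c<0$, which the corollary's statement allows but the paper, working under \eqref{Hyp:positive}, never addresses. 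Moreover, the same covariance computation with $\Gamma_i=c\gamma_i$, $\gamma_i\in\mathbb{Q}^+$, would give a clean self-contained proof of the paper's stronger remark that rational ratios suffice. What the paper's route buys in exchange is brevity, the appendix machinery being already in place.
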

%%%%%%%%%%%%%%%%%%%%%%%% Section III %%%%%%%%%%%%%%%%%%%%%%%%%%%%
\section{Symplectic Reduction and Relative Periodic Orbits in the Plane}
\label{sec:3}
In this chapter, we will use standard symplectic reduction to study the Hamiltonian in a reduced phase space. In the first section, we give some properties for the generalized Jacobi variable introduced by Lim \cite{lim1989canonical}. The main result is the compactness of energy surface of the reduced Hamiltonian in the reduced phase space. We do not give explicit calculation for coordinates transformations in this chpater. Instead, a detailed example of the 5-vortex problem is studied with explicit coordinate transformation in Appendix \ref{CT}.
\subsection{Lim's generalized Jacobi coordinates}
\label{subsec:3.1}
We would like to fix the center of vorticity to the origin thus study only centred orbits. The reason is that, any non-centred relative equilibrium, when putting into a rotationing framework around the origin, might automatically become a relative periodic solution that is not a relative equilibrium. This situation is illustrated in figure \ref{fig:3}.
\begin{figure}
\label{RAnon}
\captionsetup{justification=centering}
\begin{center}
\includegraphics[width=60mm,scale=0.5]{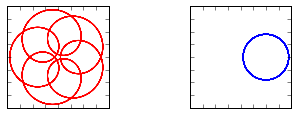}
\caption{A non trivial relative periodic (left) coming from a non-centred relative equilibrium in the original phase space (right)}
\label{fig:3}
\end{center}
\end{figure}
However, this kind of solution (orbits in red color in the left of figure \ref{fig:3}) is not the solution that we are searching for. Because it does not give any further insights about our dynamic system. As a result, we should insist on centred orbits, and we need some transformation to fix the vorticity centre to the origin.\\
The usual tool in celestial mechanics is the so called Jacobi coordinates. However, the usual Jacobi coordinates are not suitable for the $N$-vortex problems. This is because the conjugate variables $(q,p)$ are separated in the Hamiltonian for Newtonian gravitation N-body problem, i.e., 
\begin{align*}
H(q,p) = \frac{|p|^2}{2} + U(q) \tag{N-Body}
\end{align*}
while in $N$-vortex problem the conjudate variables $(x,y)$ are mixed
\begin{align*}
H(x,y)= -\frac{1}{4\pi}\sum_{i,j =1, i< j}^{N}\Gamma_{i}\Gamma_{j}\log{|z_i -z_j|^2} \tag{N-Vortex}\end{align*}
Hence if we perform a normal Jacobi transforamtion, we can fix the center of vorticity, but the resulting new Hamiltonian might be no longer invariant under rotation. There has been some study on symplectic transformations adapted to the $N$-vortex problem. For example \cite{khanin1982quasi,borisov1998dynamics,lim1989canonical} and so on. In particular, Lim's method in \cite{lim1989canonical} has introduced a canonical transformation for the $N$-vortex Hamiltonian based on graph theory. This transformation works particularly well when all the vorticities are postive, and is quite ideal for our purpose of evaluating the energy surfaces. We hence apply Lim's generalized jacobi coordinates to simplify our $N$-vortex system.\\
First, we make the change of variable 
\begin{align}
Z_i = (X_i, Y_i) = (\sqrt{\Gamma_i}x_i,\sqrt{\Gamma_i}y_i )
\end{align}
It turns out that $\textbf{Z}=(Z_1,Z_2,...,Z_N)$ follows the usual Hamiltonian system 
\begin{equation}
\dot{\textbf{Z}}(t) = X_{\hat{H}}(\textbf{Z}(t)) = \mathcal{J}_{N}\nabla \hat{H}(\textbf{Z}(t)) \quad \textbf{Z}=(Z_1,Z_2,...,Z_N),\quad  Z_i\in \mathbb{R}^{2} \tag{H2} \label{sys:H2}
\end{equation}
where 
\begin{align*}
\hat{H}(\textbf{Z}) = -\frac{1}{4\pi}\sum_{i,j =1, i< j}^{N}\Gamma_{i}\Gamma_{j}\log{|\frac{Z_i}{\sqrt{\Gamma_i}} -\frac{Z_j}{{\sqrt{\Gamma_j}}}|^2} 
\end{align*}
Then for the new variables,
\begin{align*}
\hat{P}(\textbf{Z}(t)) = \sum_{1\leq i\leq N}\sqrt{\Gamma_i} X_i(t),\quad \hat{Q}(\textbf{Z}(t))= \sum_{1\leq i \leq N} \sqrt{\Gamma_i} Y_i(t),\quad \hat{I}(\textbf{Z}(t)) = \sum_{1\leq i\leq N} |Z_i(t)|^2
\end{align*}
are first integrals.
We identify till the end of this section the coordinate in $Z_k = (X_k, Y_k) \in \mathbb{R}^2$ to the complex number $Z_k = X_k + iY_k $. A transformation from $\mathbb{C}^N$ to $\mathbb{C}^N$ will also be considered as a transformation from $\mathbb{R}^{2N}$ to $\mathbb{R}^{2N}$.
\begin{Pro}
\label{Pro:LimTransform}
(\cite[page~263]{lim1989canonical}) There exists a linear transformation for the positive planar N-vortex problem 
\begin{align*}
\phi: \quad \mathbb{C}^N &\rightarrow \mathbb{C}^N\\
Z= (X,Y) &\xrightarrow{\phi} W=(q,p) 
\end{align*}
s.t. 
\begin{enumerate}
\item $\phi$ is unitary; 
\item In the new coordinate W = (q,p), one has \\
\begin{align}
\begin{cases}
\displaystyle q_N = \frac{\sum_{1\leq N}\sqrt{\Gamma_i}X_i}{\sum_{1\leq i\leq N} \Gamma_i}\\
\displaystyle p_N = \frac{\sum_{1\leq N}\sqrt{\Gamma_i}Y_i}{\sum_{1\leq i\leq N} \Gamma_i}
\end{cases}.
\end{align}
\end{enumerate}
\end{Pro}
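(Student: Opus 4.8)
The plan is to realize $\phi$ as the change of basis into an orthonormal basis of $\mathbb{C}^N$ whose final vector points along the direction of the vorticity centre. First I would record why \emph{unitarity} is exactly the right condition. Equip $\mathbb{C}^N$ with the standard Hermitian product $\langle \textbf{Z}, \textbf{Z}'\rangle = \sum_{i=1}^N Z_i \overline{Z_i'}$, so that $\hat{I}(\textbf{Z}) = \langle \textbf{Z}, \textbf{Z}\rangle$ is the squared norm. Viewing a $\mathbb{C}$-linear map as a real map $\mathbb{R}^{2N} \to \mathbb{R}^{2N}$, the canonical symplectic form is (up to sign) the imaginary part of $\langle \cdot, \cdot\rangle$; hence a $\mathbb{C}$-linear unitary map automatically preserves both $\hat I$ and the symplectic structure, i.e. it is canonical. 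This is what guarantees that the system keeps its Hamiltonian form \eqref{sys:H2} in the new coordinates $\textbf{W} = (q,p)$, while the first integral $\hat I$ becomes simply $\sum_k (q_k^2 + p_k^2)$.

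Next I would single out the vorticity-centre direction. Set $\displaystyle e = \frac{1}{\sqrt{\sum_{i} \Gamma_i}}\left(\sqrt{\Gamma_1}, \ldots, \sqrt{\Gamma_N}\right) \in \mathbb{C}^N$, which is a real unit vector for $\langle\cdot,\cdot\rangle$ since $\sum_i (\sqrt{\Gamma_i})^2 = \sum_i \Gamma_i$. Its associated coordinate is $\langle \textbf{Z}, e\rangle = \frac{1}{\sqrt{\sum_i \Gamma_i}} \sum_i \sqrt{\Gamma_i} Z_i$, which is proportional to the vorticity centre $\frac{\sum_i \sqrt{\Gamma_i} Z_i}{\sum_i \Gamma_i}$ recorded in the statement. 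I would then complete $e$ to an orthonormal basis $(e_1, \ldots, e_{N-1}, e_N = e)$ of $\mathbb{C}^N$ by Gram--Schmidt and define $\phi$ by $W_k = \langle \textbf{Z}, e_k\rangle$. By construction $\phi$ is unitary, and its last component reproduces the desired $q_N, p_N$ (up to the scalar $\sqrt{\sum_i \Gamma_i}$ fixed by the chosen normalisation of $e$). At this level of generality the existence is therefore immediate from linear algebra.

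The content of Lim's statement, however, is not bare existence but the construction of \emph{generalized Jacobi coordinates}, in which the intermediate vectors $e_1, \ldots, e_{N-1}$ also carry physical meaning. To recover these I would replace the blind Gram--Schmidt step by a recursive, graph-theoretic choice: organise the indices $\{1, \ldots, N\}$ as the leaves of a binary tree, and at each internal node merging two clusters $V_1, V_2$ introduce one new basis vector proportional to the difference of their $\sqrt{\Gamma}$-weighted centres, while passing the combined weighted centre of $V_1 \cup V_2$ (with cluster weight $\sum_{i \in V_1 \cup V_2} \Gamma_i$) further up the tree; the root then yields exactly $e$. The main obstacle is the orthogonality bookkeeping: one must check that the relative vectors produced at distinct nodes are mutually orthogonal and orthogonal to $e$, which hinges on the correct weighting (the cluster mass being $\sum \Gamma_i$, not $\sum\sqrt{\Gamma_i}$) and on the nesting of the clusters. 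Once this is verified the family is orthonormal after normalisation, $\phi$ is unitary, and property (2) holds; this is the step I would treat most carefully, and it is precisely where positivity of the $\Gamma_i$ is used to keep every cluster weight nonzero.
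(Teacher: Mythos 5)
Your proposal is correct in substance, but it takes a genuinely different route from the paper, which in fact supplies no proof in the main text at all: Proposition~\ref{Pro:LimTransform} is quoted from Lim, and the only justification given is the explicit matrix $T=AB$ written out for $N=5$ in Appendix~\ref{CT} --- which is precisely the recursive, binary-tree construction you sketch in your last paragraph. Your first two paragraphs replace this by an abstract linear-algebra argument: observe that $e=\bigl(\sum_i\Gamma_i\bigr)^{-1/2}\bigl(\sqrt{\Gamma_1},\dots,\sqrt{\Gamma_N}\bigr)$ is a unit vector of $\mathbb{C}^N$, complete it to an orthonormal basis by Gram--Schmidt, and invoke $\mathbb{U}(N)=\mathbb{O}(2N)\cap\mathbb{SP}(2N)$ to get canonicity and preservation of $\hat I$. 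This is shorter, uses positivity exactly where it is needed ($\sqrt{\Gamma_i}\in\mathbb{R}$ and nonvanishing cluster weights), and it proves everything the rest of the paper actually uses: the later sections never exploit the physical meaning of the intermediate coordinates $W_1,\dots,W_{N-1}$, only unitarity and the fact that $W_N$ vanishes exactly when the vorticity centre does. What Lim's graph-theoretic construction buys in exchange is explicitness --- concrete relative coordinates attached to nested clusters, as displayed in Appendix~\ref{CT}; your sketch of that construction leaves the orthogonality bookkeeping unverified, but this is harmless since your Gram--Schmidt argument already establishes the proposition. Finally, the scalar discrepancy you flag is genuine and lies in the paper's statement, not in your proof: a unitary map cannot have last component exactly $\bigl(\sum_i\Gamma_i\bigr)^{-1}\sum_i\sqrt{\Gamma_i}Z_i$ unless $\sum_i\Gamma_i=1$ (indeed the last row of $T=AB$ in Appendix~\ref{CT} has norm $\bigl(\sum_i\Gamma_i\bigr)^{-1/2}$), so your normalisation $W_N=\bigl(\sum_i\Gamma_i\bigr)^{-1/2}\sum_i\sqrt{\Gamma_i}Z_i$ is the consistent one, and the rescaling changes nothing downstream since only the vanishing locus of $(q_N,p_N)$ and its conjugate-cyclic structure matter.
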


Since
$\mathbb{U}(N) = \mathbb{O}(2N) \bigcap \mathbb{SP}(2N)$, the transformation
$\phi$, seen as a transformation $\mathbb{R}^{2N} \xrightarrow{\phi} \mathbb{R}^{2N}$, is thus a real linear symplectic transformation.
As a result, we see that $q_N$ is a first integral and $p_N$ as its conjugate variable is cyclic. We can thus fix $q_N = p_N = 0$, and get a reduced Hamiltonian on $\mathbb{R}^{2N-2}$: 
\begin{align}
\bar{H}(q_1,p_1, q_2, p_2,..., q_{N-1}, p_{N-1} ; q_N = p_N = 0) = \bar{H}(\textbf{W};W_N=0)
\end{align}
\\
Consider the dynamic system\\
\begin{equation}
\dot{\textbf{W}}(t) = \textit{X}_{\bar{H}}(\textbf{W}(t)) \tag{H3}  \label{sys:H3}
\end{equation}

We resume some properties of the new Hamiltonian $\bar{H}$: 
\begin{Pro}
\label{Pro:AfterLimTransform}
Consider the Hamiltonian system \eqref{sys:H3} and the original Hamiltonian system \eqref{sys:H1} and \eqref{sys:H2} . Then:
\begin{enumerate}
\item Any orbit of $\bar{H}$ is a centred orbit of $H$;\\
\item The system \eqref{sys:H3} is invariant under rotation;
\item Define 
\begin{align}
\bar{I}(W) = \sum_{1\leq i\leq N-1} (p_i^2+ q_i^2)
\end{align}
Then $\bar{I}(\textbf{W}) = \hat{I}(\textbf{Z})$.
\end{enumerate}
\end{Pro}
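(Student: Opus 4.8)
The plan is to dispatch the three claims separately, each resting on two structural properties of Lim's map $\phi$ recorded in Proposition \ref{Pro:LimTransform}: that $\phi$ is $\mathbb{C}$-linear and unitary, and that the last complex coordinate $W_N=(q_N,p_N)$ carries precisely the (conserved) center-of-vorticity data. The first preliminary I would establish is that the scaling $Z_i=\sqrt{\Gamma_i}\,z_i$ intertwines the momenta: $\hat P(\textbf{Z})=\sum_{i=1}^N\sqrt{\Gamma_i}\,X_i=\sum_{i=1}^N\Gamma_i x_i=P(\textbf{z})$ and likewise $\hat Q(\textbf{Z})=Q(\textbf{z})$. Combined with the formulas of Proposition \ref{Pro:LimTransform}, namely $q_N=\hat P/\sum_{i=1}^N\Gamma_i$ and $p_N=\hat Q/\sum_{i=1}^N\Gamma_i$, this shows that the defining constraints $q_N=p_N=0$ of system \eqref{sys:H3} are exactly $P(\textbf{z})=Q(\textbf{z})=0$.

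For claim (1) it then remains to argue that a trajectory of $\bar H$ lifts to a genuine trajectory of $H$. Here I would note that since both $\hat P$ and $\hat Q$ are first integrals of \eqref{sys:H2} and $\phi$ is symplectic, both $q_N$ and $p_N$ are first integrals of the transported Hamiltonian $\hat H\circ\phi^{-1}$; consequently this Hamiltonian is independent of the whole pair $(q_N,p_N)$, the equations for $(W_1,\dots,W_{N-1})$ decouple, and the standard symplectic reduction at $q_N=p_N=0$ returns $\bar H$ with the restricted flow. Transporting such an orbit back through $\phi^{-1}$ and the scaling produces an orbit of $H$ which, by the preliminary, is centred.

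Claims (2) and (3) are then short. For (2) the key observation is that rotation by $\theta$ acts on $\textbf{z}$ --- and, the scaling being rotation-equivariant, on $\textbf{Z}$ --- as multiplication by the scalar $e^{i\theta}$ on each complex slot, i.e. as $e^{i\theta}\,\mathrm{Id}$ on $\mathbb{C}^N$; being a scalar, it commutes with the $\mathbb{C}$-linear $\phi$, so in $W$-coordinates rotation is again $W_k\mapsto e^{i\theta}W_k$. Since $\hat H$ inherits rotation invariance from $H$, its transport is rotation invariant, and because rotation fixes the slice $\{W_N=0\}$, the restriction $\bar H$ is invariant under $W_k\mapsto e^{i\theta}W_k$ for $1\le k\le N-1$, giving invariance of \eqref{sys:H3}. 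For (3) I would use unitarity directly: $\phi$ preserves the Hermitian norm, so $\sum_{i=1}^N|W_i|^2=\sum_{i=1}^N|Z_i|^2=\hat I(\textbf{Z})$; splitting off the last term and imposing $q_N=p_N=0$ yields $\bar I(\textbf{W})=\sum_{i=1}^{N-1}(q_i^2+p_i^2)=\sum_{i=1}^N|W_i|^2=\hat I(\textbf{Z})$.

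None of the three computations is heavy. The one point demanding genuine care is the reduction in claim (1): one must verify that \emph{both} components of $W_N$ are simultaneously conserved, so that fixing $q_N=p_N=0$ is dynamically consistent and the restricted flow is truly Hamiltonian for $\bar H$. The conceptual crux of the whole proposition, however, is the single remark that a planar rotation is a complex scalar and hence commutes with the complex-linear $\phi$; it is this that lets both rotation invariance and the norm identity pass unscathed through Lim's change of variables.
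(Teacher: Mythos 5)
Your proof is correct and follows essentially the same route as the paper: all three claims are deduced from the structural facts that Lim's map $\phi$ is $\mathbb{C}$-linear and unitary and that $W_N=(q_N,p_N)$ carries exactly the center-of-vorticity data, so that rotation (a complex scalar) commutes with $\phi$ and the Hermitian norm is preserved. The only difference is one of detail, not of approach: you spell out why fixing $q_N=p_N=0$ is dynamically consistent (both components are first integrals, hence the transported Hamiltonian is independent of the pair and the restricted flow is Hamiltonian for $\bar H$), a point the paper disposes of in a single sentence.
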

\begin{proof}:
These propositions are direct consequences of the special symplectic transformation $\phi$.\\
1. $(q_N,p_N)$ corresponds to the vorticity centre in the original Hamiltonian and they are fixed at 0. Hence all the orbits of $\bar{H}$ are centred orbit of $H$.\\
2. $\phi$ is a linear transformation $\mathbb{C}^N \xrightarrow{\phi} \mathbb{C}^N$. The term $\displaystyle \log|\frac{Z_i}{\Gamma_i}-\frac{Z_j}{\Gamma_j}|^2$
under the transformation $\phi$ now becomes 
\begin{equation}
\displaystyle \log|\frac{Z_i}{\sqrt{\Gamma_i}}-\frac{Z_j}{\sqrt{\Gamma_j}}|^2= \log|\frac{\sum_{1\leq k\leq N-1} c_{ki} W_i}{\sqrt{\Gamma_i}}-\frac{\sum_{1\leq k\leq N-1} c_{kj} W_j}{\sqrt{\Gamma_j}}|^2 \label{eq:quatratic}
\end{equation}
where the coefficients $c_{ki}$ and $c_{kj}$ are decided by $\phi$. It is clearly still invariant under rotation. \\
3. We know that $I(\textbf{z})$ is a first integral for system \eqref{sys:H1}, hence $\hat{I}(\textbf{Z}) = \sum_{1\leq i \leq N} |Z_i|^2$ is a first integral for system \eqref{sys:H2}. Now that $\phi$ is orthogonal, we have $\sum_{1\leq i \leq N} |Z_i|^2 = \sum_{1\leq i \leq N} |W_i|^2$, while
$W_N = (q_N,p_N) = 0$, we see that actually $\sum_{1\leq i \leq N} |W_i|^2= \sum_{1\leq i \leq N-1} |W_i|^2$. In other words,
$\bar{I}(\textbf{W}) = \hat{I}(\textbf{Z})$.
\end{proof}
Recall we are interested in normalised orbits of the original Hamiltonian system \eqref{sys:H1}. According to results in the previous proposition, they can be characterized by the new coordinates, i.e.:
\begin{Pro}
\label{Pro:NewNormalOrbit}
\label{equivalence}
The orbits of system (\ref{sys:H3}) which satisfies $\bar{I}(\textbf{W}) = 1$ are the normalised orbits of the system \eqref{sys:H1}.
\end{Pro}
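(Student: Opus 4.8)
The plan is to prove the two-way correspondence by composing the three transformations $\textbf{z}\mapsto\textbf{Z}\mapsto\textbf{W}$ together with the reduction $q_N=p_N=0$, and then to collapse the whole statement onto two facts: that the orbits of \eqref{sys:H3} are exactly the \emph{centred} orbits of \eqref{sys:H1}, and that the conserved quantity $\bar{I}$ coincides with $I$ along such orbits. The first fact is essentially Proposition \ref{Pro:AfterLimTransform}(1); the second will come from a short chain of identities relating $I$, $\hat{I}$ and $\bar{I}$. Once both are in place, matching the normalisation constraints $I=1$ and $\bar{I}=1$ finishes the proof.

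First I would pin down the moment maps explicitly. Under the diagonal scaling $Z_i=(\sqrt{\Gamma_i}x_i,\sqrt{\Gamma_i}y_i)$ one has $\hat{P}(\textbf{Z})=\sum_i\sqrt{\Gamma_i}X_i=\sum_i\Gamma_i x_i=P(\textbf{z})$ and likewise $\hat{Q}(\textbf{Z})=Q(\textbf{z})$, while $\hat{I}(\textbf{Z})=\sum_i|Z_i|^2=\sum_i\Gamma_i|z_i|^2=I(\textbf{z})$. By the defining formula for $\phi$ in Proposition \ref{Pro:LimTransform}(2) we have $q_N=\hat{P}/\bigl(\sum_i\Gamma_i\bigr)$ and $p_N=\hat{Q}/\bigl(\sum_i\Gamma_i\bigr)$; since hypothesis \eqref{Hyp:positive} forces $\sum_i\Gamma_i>0$, the reduction locus $\{q_N=p_N=0\}$ is precisely $\{\hat{P}=\hat{Q}=0\}=\{P=Q=0\}$, i.e. the set of centred configurations.

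Next I would verify the two directions together. Because $\phi$ is a genuine linear symplectomorphism (unitary, hence in $\mathbb{SP}(2N)$, as noted just after Proposition \ref{Pro:LimTransform}) and the scaling is an invertible change of variables, the composite is a bijection of configurations that intertwines the flows; moreover $q_N,p_N$ are first integrals, so $\{q_N=p_N=0\}$ is flow-invariant and the restricted dynamics is exactly \eqref{sys:H3}. Thus an orbit of \eqref{sys:H3} pulls back to a centred orbit of \eqref{sys:H1} by Proposition \ref{Pro:AfterLimTransform}(1), and conversely every centred orbit of \eqref{sys:H1}, having $P=Q=0$, lands in $\{q_N=p_N=0\}$ and is therefore an orbit of \eqref{sys:H3}. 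It remains only to match the normalisation conditions. But combining $I(\textbf{z})=\hat{I}(\textbf{Z})$ from the previous paragraph with $\bar{I}(\textbf{W})=\hat{I}(\textbf{Z})$, which is Proposition \ref{Pro:AfterLimTransform}(3), yields $\bar{I}(\textbf{W})=I(\textbf{z})$; hence $\bar{I}=1$ on a given \eqref{sys:H3} orbit if and only if $I=1$ on the corresponding \eqref{sys:H1} orbit, which establishes the equivalence.

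The bookkeeping is light, and the only point demanding genuine care is the claim that this is a true bijection \emph{of orbits} (not merely of initial data) and that the reduced flow agrees with the restriction of the full flow to $\{q_N=p_N=0\}$. This is where I would be most careful: it rests on $\phi$ being symplectic, so that Hamiltonian vector fields are transported correctly, and on $q_N,p_N$ being conserved, so that the level set is invariant and the symplectic reduction is legitimate. Both ingredients are already recorded in the discussion preceding this proposition, so the argument amounts to invoking Propositions \ref{Pro:LimTransform} and \ref{Pro:AfterLimTransform} in the right order rather than proving anything substantially new.
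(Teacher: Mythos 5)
Your proposal is correct and follows the same route the paper takes: the paper states this proposition without a separate proof, treating it as an immediate consequence of Propositions \ref{Pro:LimTransform} and \ref{Pro:AfterLimTransform}, which is exactly the chain you assemble (centred configurations $\Leftrightarrow\{q_N=p_N=0\}$ via positivity of $\sum_i\Gamma_i$, flow invariance of that level set, and $I(\textbf{z})=\hat{I}(\textbf{Z})=\bar{I}(\textbf{W})$). Your write-up merely makes explicit the bookkeeping, including the converse direction, that the paper leaves implicit.
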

\subsection{Energy Surface in Reduced Phase Space} 
\label{subsec:3.2}
 The Hamiltonian system \eqref{sys:H3} with $\bar{H}(\textbf{W};W_N=0) : \mathbb{R}^{2N-2} \rightarrow \mathbb{R}$ is invariant under rotation, and $\bar{I}(\textbf{W})$ is the first integral.
By the theory of the standard symplectic reduction, we can fix $\bar{I} =1 $ and apply Hopf-fibration, it turns out that \eqref{sys:H3} canonically induces a Hamiltonian system 
\begin{equation}
\dot{\tilde{\textbf{W}}} = \textit{X}_{\tilde{H}}(\tilde{\textbf{W}} )  =\mathcal{\tilde{J}}(\tilde{\textbf{W}})\nabla \tilde{H}(\tilde{\textbf{W}}) \tag{H4} \label{sys:H4}
\end{equation}
 on $\mathbb{CP}^{N-2}$ \cite{abraham1978foundations}. Each point in $\mathbb{CP}^{N-2}$ represents a equivalent class of configurations up to the translation (by fixing $q_N= p_N=0$)  the rotation (by taking quotient of $\mathbb{SO}(2)$), and the homothety(by fixing $\bar{I}(\textbf{W})= 1$, thus $\nabla \bar{I}(\textbf{W}) \neq 0$). By Proposition \ref{equivalence}, each orbit on $\mathbb{CP}^{N-2}$ stands for a relative normalised orbit of system \eqref{sys:H1}. We resumed the whole reduction process in the following diagram:\\
\begin{tikzcd}
&&&&                     & \mathbb{S}^1 \arrow[hookrightarrow]{d} \\
&&& \mathbb{R}^{2N}  \arrow[hookleftarrow]{r}{q_N=p_N=0}&\mathbb{R}^{2N-2}  \arrow[hookleftarrow]{r}{\bar{I}= 1} &  \mathbb{S}^{2N-3}                \arrow [twoheadrightarrow]{d} {/ \mathbb{SO}(2)}     \\
&&&&             & \mathbb{CP}^{N-2}                     \\                                                  
\end{tikzcd}
\\
Although the energy surfaces for original Hamiltonian is not even bounded, due to the invariance under translation and the opposited singularities in logarithm function, the energy surface of the reduced Hamiltonian is indeed compact.
\begin{Lem}
\label{Lem:compact}
Let $c \in \mathbb{R}$. Consider the hypersurface $S_c = \tilde{H}^{-1}(c) \subset \mathbb{CP}^{N-2}   $. If $S_c \neq \emptyset$, then $S_c$ is compact. 
\end{Lem}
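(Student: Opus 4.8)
The plan is to exploit the compactness of the ambient space $\mathbb{CP}^{N-2}$ together with the fact that, under the positivity hypothesis \eqref{Hyp:positive}, the reduced Hamiltonian $\tilde{H}$ blows up to $+\infty$ at every collision configuration. Since $\mathbb{CP}^{N-2}$ is itself compact, and $S_c$ is a subset of it, it suffices to prove that $S_c$ is \emph{closed} in $\mathbb{CP}^{N-2}$. Away from the collision set $\Delta \subset \mathbb{CP}^{N-2}$ (the image of those configurations with $z_i = z_j$ for some $i \neq j$) the function $\tilde{H}$ is smooth, so $S_c$ is automatically closed there by continuity; the only way closedness could fail is if a sequence in $S_c$ accumulated at a point of $\Delta$. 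The strategy is therefore to rule this out.

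First I would record the key a priori bound. Any point of $\mathbb{CP}^{N-2}$ lifts, through the Hopf fibration and the unitary transform $\phi$ of Proposition \ref{Pro:LimTransform}, to a centred configuration $\mathbf{z}$ with $I(\mathbf{z}) = \bar{I}(\mathbf{W}) = 1$. Because the vorticities are positive, $\Gamma_i |z_i|^2 \leq I(\mathbf{z}) = 1$ for each $i$, so all positions, and hence all mutual distances $|z_i - z_j|$, are bounded above by a constant depending only on $\mathbf{\Gamma}$. Consequently $\log|z_i - z_j|^2$ is bounded above, and each interaction term $-\frac{1}{4\pi}\Gamma_i\Gamma_j \log|z_i - z_j|^2$ is therefore bounded \emph{below}, again using $\Gamma_i\Gamma_j > 0$. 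This is precisely where \eqref{Hyp:positive} is indispensable: it excludes the cancellation of a $+\infty$ against a $-\infty$ that mixed signs would permit.

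Next I would establish the blow-up claim: for any sequence $w^k \in \mathbb{CP}^{N-2}\setminus\Delta$ converging to some $w^* \in \Delta$, one has $\tilde{H}(w^k) \to +\infty$. To see this, choose representatives $\mathbf{z}^k$ on the sphere $\bar{I} = 1$; by compactness of that sphere, pass to a subsequence with $\mathbf{z}^k \to \mathbf{z}^*$, where $\mathbf{z}^*$ projects to $w^* \in \Delta$ and thus has at least one coincident pair. For that pair $|z_i^k - z_j^k| \to 0$, so the corresponding term $-\frac{1}{4\pi}\Gamma_i\Gamma_j\log|z_i^k - z_j^k|^2 \to +\infty$, while every remaining term stays bounded below by the previous step. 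Hence $\tilde{H}(w^k) = H(\mathbf{z}^k) \to +\infty$.

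Finally I would assemble the contradiction. Suppose $w^k \in S_c$ with $w^k \to w^*$ in $\mathbb{CP}^{N-2}$. If $w^* \notin \Delta$, continuity of $\tilde{H}$ gives $\tilde{H}(w^*) = \lim_k \tilde{H}(w^k) = c$, so $w^* \in S_c$; if $w^* \in \Delta$, the blow-up claim forces $\tilde{H}(w^k) \to +\infty$, contradicting $\tilde{H}(w^k) \equiv c$. Thus every limit point of $S_c$ lies in $S_c$, so $S_c$ is closed in the compact space $\mathbb{CP}^{N-2}$ and is therefore compact. The genuinely delicate point is the a priori lower bound on the individual interaction terms and the consequent exclusion of collision limits, both of which hinge entirely on the sign condition \eqref{Hyp:positive}; once these are in place the conclusion is the routine fact that a closed subset of a compact space is compact.
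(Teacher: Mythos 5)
Your proof is correct and takes essentially the same route as the paper's: both use $\bar{I}=1$ together with the positivity of the vorticities to bound all mutual distances above, conclude that each logarithmic interaction term is bounded below so that the fixed energy level $c$ cannot be approached by configurations nearing collision, and then deduce closedness (hence compactness) from the compactness/boundedness of the ambient reduced space. Your write-up is in fact more explicit than the paper's about the blow-up of $\tilde{H}$ at the collision set and about precisely where hypothesis \eqref{Hyp:positive} enters, but the underlying argument is the same.
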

\begin{proof}:
Consider the set $\bar{S}_c=\bar{H}^{-1}(c)\cap \bar{I}^{-1}(1)$, which is the lifted set of $S_c$ from $\mathbb{CP}^{N-2}   $ to $\mathbb{S}^{2N-3}$.  If $\bar{S}_c$ is compact, then $S_c$ will be compact by quotient topology. First,  $\mathbb{S}^{2N-3}$ is a bounded manifold, hence the boundedness of $\bar{S}_c$. Next, recall that $\bar{I}(\textbf{W})= 1$ for all points in $\bar{S}_c$, which implies that all the mutual distances are bounded from above, since each squared mutual distance is a quadratic functions of $\textbf{W}$, as is shown in \eqref{eq:quatratic}. In other word, by the fact that $\bar{H}$ and $\bar{I}$ are preserved by the lifted flow of $\phi_{\bar{H}}$, the mutual distances cannot be too small. As a result, the energy surface $\bar{S}_c$ is isolated from singularity. But then the preimage of a closed set must be closed, hence $\bar{S}_c$  is closed. Hence $\bar{S}_c$ is compact. So is $S_c$.\\
\end{proof}
\subsection{Symplectic Capacity and Existence of Normalised Non-Trivial Relative Periodic Orbits}
\label{subsec:3.3}
We are now ready to prove the theorem concerning the existence of NTNRPOs of system \eqref{sys:H1}. Our main tool is the so called symplectic capacity, in particular the Hofer-Zehnder capacity $c_0$\cite{hofer2012symplectic}, which links periodic solution of Hamiltonian system to symplectic invariant. It is closely related to the searching of periodic orbits on a prescribed energy surface, initially studied by Rabinowitz \cite{rabinowitz1978periodic} and Weinstein \cite{weinstein1978periodic}. For general introduction to symplectic capacity theory one could turn to \cite{viterbo31capacites,hofer2012symplectic} and the references therein. 

\begin{Thm}
\label{Thm:ManyRPOLocal}
Suppose that $S_c = \tilde{H}^{-1}(c)$ is a non-empty regular hypersurface, then there exists a non-constant sequence $\lambda_k \rightarrow c$ and a sequence of normalised non-trivial relative periodic orbits  $\vz^k(t)$ of system \eqref{sys:H1} s.t. $H(\vz^k) = \lambda_k$.
\end{Thm}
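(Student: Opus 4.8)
The plan is to exploit the compactness of the reduced energy surface $S_c$ established in Lemma~\ref{Lem:compact} together with the Hofer--Zehnder capacity theory to produce periodic orbits of the reduced system \eqref{sys:H4}, and then to lift and interpret these as NTNRPOs of the original system \eqref{sys:H1}. First I would recall the key existence result from Hofer--Zehnder theory: if $S_c$ is a compact regular (i.e.\ the energy is a regular value, so $S_c$ is a smooth hypersurface with non-vanishing gradient) energy surface in a symplectic manifold, and if it bounds a region of finite Hofer--Zehnder capacity, then for almost every nearby value---more precisely, for a sequence of values $\lambda_k \to c$---the energy surface $S_{\lambda_k}$ carries a periodic orbit. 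The almost-existence / dense-existence mechanism produces periodic orbits not necessarily on $S_c$ itself but on a sequence of level sets accumulating at $c$, which is exactly why the statement asserts $\lambda_k \to c$ rather than a periodic orbit on $S_c$ itself.

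The key steps, in order, are as follows. First I would verify that $S_c$, being a compact regular hypersurface in $\mathbb{CP}^{N-2}$, fits the hypotheses of the capacity argument: by Lemma~\ref{Lem:compact} compactness holds whenever $S_c \neq \emptyset$, and regularity is assumed in the statement. Second, I would invoke the Hofer--Zehnder almost-existence theorem on a thickening of $S_c$ by a parametrised family of level sets $\{S_\lambda\}_{\lambda \in (c-\delta, c+\delta)}$, yielding a sequence $\lambda_k \to c$ and nonconstant periodic solutions $\tilde{\mathbf{W}}^k(t)$ of \eqref{sys:H4} lying on $S_{\lambda_k}$. Third, I would lift each periodic orbit $\tilde{\mathbf{W}}^k$ from $\mathbb{CP}^{N-2}$ back up through the Hopf fibration $\mathbb{S}^{2N-3} \twoheadrightarrow \mathbb{CP}^{N-2}$ to an orbit of \eqref{sys:H3} on the sphere $\bar{I}^{-1}(1)$: a closed orbit downstairs lifts to an orbit that closes up \emph{only up to the $\mathbb{SO}(2)$ action}, i.e.\ to a relative periodic orbit $\mathbf{W}^k(t)$ with $\mathbf{W}^k(t+T_k) = g_k \mathbf{W}^k(t)$ for some rotation $g_k$. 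Fourth, by Proposition~\ref{Pro:NewNormalOrbit} and the centring/rotation-invariance properties in Proposition~\ref{Pro:AfterLimTransform}, the orbit $\mathbf{W}^k$ corresponds, via $\phi^{-1}$ and the rescaling $Z_i = \sqrt{\Gamma_i}\,z_i$, to a normalised relative periodic orbit $\vz^k(t)$ of \eqref{sys:H1} with $H(\vz^k) = \lambda_k$ (after identifying reduced and original energy values appropriately).

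The main obstacle I anticipate is the last qualitative step: ensuring that the orbits produced are genuinely \emph{non-trivial}, that is, not normalised relative equilibria. A periodic orbit of the reduced system \eqref{sys:H4} on $\mathbb{CP}^{N-2}$ is nonconstant by construction, whereas a relative equilibrium of \eqref{sys:H1} reduces precisely to a \emph{fixed point} in $\mathbb{CP}^{N-2}$ (since the rigid rotation is quotiented out). Thus a nonconstant reduced periodic orbit cannot descend from a relative equilibrium, which gives the non-triviality for free---but I would need to state carefully that the Hofer--Zehnder orbits are nonconstant and that the correspondence between fixed points of the reduced flow and relative equilibria is exact. A secondary technical point is bookkeeping the energy: the capacity theorem controls the reduced energy $\tilde{H}$, and one must track how values of $\tilde{H}$ relate to values of $H$ through the chain of reductions so that $H(\vz^k) = \lambda_k$ is meaningful and the sequence genuinely accumulates at $c$. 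Finally, I would need the period $T_k$ to be finite and the rotation $g_k$ well defined, which follows from compactness of $S_{\lambda_k}$ and continuity of the lift, respectively.
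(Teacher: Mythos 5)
Your overall strategy is the same as the paper's: thicken the compact regular surface $S_c$ into a one-parameter family of level sets, apply the Hofer--Zehnder almost-existence theorem to get non-constant periodic orbits of \eqref{sys:H4} on levels $\lambda_k \to c$, lift them through the Hopf fibration and the symplectic changes of variables to normalised relative periodic orbits of \eqref{sys:H1}, and conclude non-triviality from the fact that a relative equilibrium (being centred by construction) descends to a \emph{fixed point} of the reduced flow on $\mathbb{CP}^{N-2}$, whereas the orbits produced are non-constant. That last argument is exactly the paper's, and your lifting discussion is if anything more careful than the paper's.

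However, there is one genuine gap, and it sits at the heart of the argument: you state the finite-capacity hypothesis of the almost-existence theorem (\enquote{if it bounds a region of finite Hofer--Zehnder capacity}) but you never verify it. When you say you would \enquote{verify that $S_c$ fits the hypotheses of the capacity argument}, you only check compactness (Lemma~\ref{Lem:compact}) and regularity; neither of these implies finiteness of $c_0$ of a neighbourhood $U_\epsilon$ of $S_c$. This is not a routine point: finiteness of the Hofer--Zehnder capacity is a deep property that can fail (or is unknown) for closed symplectic manifolds in general, so compactness of the ambient space $\mathbb{CP}^{N-2}$ buys you nothing for free. The paper closes this gap by citing Hofer--Viterbo \cite[Corollary~1.5]{hofer1992weinstein}, which gives $c_0(\mathbb{CP}^{N-2},\omega)=\pi<\infty$ for the Fubini--Study form, and then uses monotonicity of the capacity to get $c_0(U_\epsilon,\omega)<\infty$; only then does \cite[Theorem~4.1]{hofer2012symplectic} apply. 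This is also precisely the point where the positivity hypothesis \eqref{Hyp:positive} enters the global argument (it is what makes the reduced space $\mathbb{CP}^{N-2}$, a manifold with finite capacity), so omitting it leaves the proof's key quantitative ingredient, and the role of the standing hypothesis, unaccounted for.
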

\begin{proof}:
Since the hypersurface $S_c$ is regular, and by Lemma \ref{Lem:compact} it is compact. In other words, the vector field $\dot{\tilde{W}} = \frac{\nabla \tilde H(\tilde{W})}{|\nabla \tilde H(\tilde{W})|^2 }$ is locally well defined. By consequence we can almost surely extend $S_c$ to a 1-parameter family of regular energy surfaces $S(\delta)$, with $-\epsilon< \delta < \epsilon$ and $S(\delta) = S_{c+\delta}$.  
Define
\begin{align*} 
U_{\epsilon} = \bigcup_{\delta\in (-\epsilon, \epsilon)} S(\delta)
\end{align*} 
Let $c_0(\mathbb{CP}^{N-2},\omega)$ be the symplectic capacity, where $\omega= Im(g)$ and $g$ is the induced K{\"a}hler metric by the standard Hermitian, then $c_0(\mathbb{CP}^{N-2},\omega) = \pi <\infty$ (\cite[Corollary~1.5]{hofer1992weinstein}), thus \textit{a fortiori}, $c_0(U,\omega)< \infty$. Classical result of almost existence (\cite[Theorem~4.1]{hofer2012symplectic})  now implies the existence of infinitely many \textbf{non-constant} periodic solutions $\{\tilde{\textbf{W}}^k\}_{k\in \mathbb{N}}$ of the Hamiltonian system (\ref{sys:H4}) and a corresponding non-constant sequence $\{\lambda_k\}_{k\in \mathbb{N}}$, which satisfy that 
$\displaystyle \tilde{H}(\tilde{\textbf{W}}^k) = \lambda_k \rightarrow c$.\\
Now given a non-constant periodic orbit $\tilde{\textbf{W}}^k(t) = \phi_{\tilde{H}}(t) \subset \mathbb{CP}^{N-2}$  of system \eqref{sys:H4}, its lifted orbit $\textbf{z}^k =\phi_{H}(t) \subset \mathbb{R}^{2N}$ is a normalised relative periodic solution of the original Hamiltonian system \eqref{sys:H1}. We show that $\textbf{z}^k$ is not a relative equilibrium. 
Recall that by our construction of the reduced phase space, the vortex center of $\vz^k(t)$ is fixed at 0. If $\vz^k(t)$ is a relative equilibrium, then $\tilde{\textbf{W}}^k(t)$ is a fixed point in the reduced space, which contradicts the fact that $\tilde{\textbf{W}}^k(t)$ is a non-constant periodic solution. The theorem is thus proved.
\end{proof}
\begin{Rmk}
Strictly speaking the reduced dynamics is only defined on $\mathbb{CP}^{N-2} \setminus \tilde{\Delta}$.
Here $\tilde{\Delta}$ is projection of the generalized diagonal $\Delta$ where collision ( of two or multiple vortices) happens, i.e.,
\begin{align*}
\Delta = \{\vz \in \mathbb{R}^{2N} |\quad z_i = z_j \text{ for some } 1\leq i<j \leq N \}
\end{align*}
Fortunately, as we see in lemma \ref{Lem:compact} that the energy surface $\tilde{S}_c$ is bounded away from $\tilde{\Delta}$, this subtlety thus does not have impact on our proof.
\end{Rmk}

We have seen that the existence of infinitely many NTNRPOs depends on the existence of a regular energy surface of the reduced Hamiltonian. Since fixed points of the reduced Hamiltonian $\tilde{H}$ lift to normalised relative equilibria of the original Hamiltonian $H$. Thus to understand where are these NTNRPOs, we must have some information about the distribution of the set $\mathcal{H}_1$ in the set $\mathcal{H}_0$. But this has already been answered by theorem \ref{Thm:ClosedNull} and corollary \ref{Col:OpenDense}. We resume all the discussion above and theorem \ref{Thm:Main} is thus proved:
\begin{proof}[\textbf{of theorem \ref{Thm:Main}}]:
By combining theorem \ref{Thm:ManyRPOLocal} and corollary. Theorem \ref{Thm:ManyRPOLocal} implies that $\mathcal{H}_2$ is dense in $\mathcal{H}_0 \setminus \mathcal{H}_1 $. Corollary \ref{Col:OpenDense} implies that $\mathcal{H}_0 \setminus \mathcal{H}_1$ is dense in $\mathcal{H}_0$. As a result $\mathcal{H}_2$ is dense in $\mathcal{H}_0$.
\end{proof}

\begin{Rmk}
To know if there exists a periodic solution exactly on the prescribed energy surface, we need in general more condition, for example being of a contact type, see \cite{viterbo1987proof}.
\end{Rmk}
%%%%%%%%%%%%%%%%%%%%%%% Section 4 %%%%%%%%%%%%%%%%%%%%%%%%%%

\section{Discrete Symmetric Reduction and Centre Symmetric Normalised Non-Trivial Relative Periodic Orbits}
\label{sec:4}
So far we have only considered the continuous symmetry, and have used the symplectic reduction to work in the reduced phase space. The factors that allowed us to find NTNRPOs are essentially:
\begin{enumerate}
\item The unitary change of variable;
\item Existence of regular and compact energy surface;
\item The finite symplectic capacity of the reduced spaces.
\end{enumerate}
On the other hand, one could alternatively impose discrete symmetric constraints on the orbits, which will largely reduce the degree of freedom until the reduced phase space is simple enough for explicit investigation. \\
The systematic investigation of this direction starts from Aref \cite{aref1982point}, where the double alternate ring configurations are studied in details. Then Koiller \cite{koiller1985aref} studied two and three vortex rings together with their bifurcations. One could turn to \cite{aref2002vortex} for the generalisation of previous results to various 2-dimensional manifolds. Later on, Tokieda, Souli{\`e}re, Montaldi and Laurent-Polz, among others, further generalized this method to find non-equilibrium (relative) periodic solutions of the so called "dansing vortices" on spheres and other manifolds under different symmetric group actions \cite{tokieda2001tourbillons,souliere2002periodic,montaldi2003vortex,laurent2004relative}. Essentially these existence results are based on two steps: In the first step, discrete symmetric reductions are carefully chosen to reduce the phase space to be 2-dimensional. Next, by fixing a regular energy level, one gets a 1-parameter curve in 1-dimensional compact space, which is diffeomorphic to a circle. As a result the (relative) periodic solutions are found.\\
In this chapter, we explain how to mix symplectic reduction and center symmetric reduction to get plenty of normalised non-trivial relative periodic solution with a center of symmetry. The whole idea is represented in the following example:
\begin{Emp}
Let $a_1, a_2, b_1,b_2$ be 4 vortices of positive vorticity. Moreover, the vorticities of $a_i$ and that of $b_i$ are the same, denoted by $\Gamma_i, i = 1,2$. Consider that at time 0, $a_i(0) = -b_i(0)$. Then by symmetry of the Hamiltonian, we see that $a_i(t)= -b_i(t),\forall t\in \mathbb{R}$. As a result, the Hamiltonian $H(a_1, a_2, b_1,b_2)$ could be considered as a system of 2 vortices:
\begin{align*}
H^{sym}(z)&= -\frac{1}{4\pi}(2\Gamma_{1}\Gamma_{2}(\log{|a_1 -a_2|^2} +\log{|a_1 +a_2|^2}) + \sum_{i=1}^{2}\Gamma_i^2 \log{|2a_i|^2})\\
\end{align*}
If we can find a relative periodic solution of this modified 2-vortex problem, we then will have actually found a symmetric relative periodic solution of the original 4-vortex problem. In particular, the above simplified Hamiltonian is still invariant under rotation. It turns out that, by mixing the discrete symmetry reduction with the symplectic reduction, the reduced phase space is \\
\begin{tikzcd}
&&&&                     & \mathbb{S}^1 \arrow[hookrightarrow]{d} \\
&&&\mathbb{R}^{8}\arrow[leftarrow]{r}{\vz=-\vz}&\mathbb{R}^{4}  \arrow[hookleftarrow]{r}{I= \frac{1}{2}}&  \mathbb{S}^{3}                \arrow [twoheadrightarrow]{d} {/ \mathbb{SO}(2)}     \\
&&&&             & \mathbb{CP}^{1}                     \\                                                  
\end{tikzcd}
\\
Now that each term in the logarithm is a quadratic function, and $I = 1$, we conclude that the non-vide energy hypersurfaces are compact. Moreover $a_1,a_2$ forms a relative equilibrium, if and only if $a_1, a_2, b_1,b_2$ also forms a symmetric relative equilibrium. 
\end{Emp} 
We claim the result more precisely:\\ 
\begin{Def}
Let $M,N\in \mathbb{N}$. We say a centred $M\times N$-vortex configuration is $C_N$-symmetric, if 
\begin{align}
\quad \vz = e^{\mathcal{J}_{M\times N}\frac{2\pi}{N}}\vz
\end{align}
We say a centred $M\times N$-vortex problem orbit is a $C_N$ symmetric orbit, if $\textbf{z}(t)$ is a $C_N$ symmetric configuration for all $t\in\mathbb{R}$.
\end{Def}
\begin{Emp}
Let $M = 3$ and $N = 4$, figure \ref{fig:4} shows roughly how these vortices are arranged at time 0.
\end{Emp}
\begin{Rmk}
A $C_N$ symmetric orbit is automatically a centred orbit.
\end{Rmk}
Now consider a $M\times N$-vortex problem, with M groups of vortices, and each group $M_l$ contains N vortices of the same vorticity $\Gamma_l>0$. At time 0, we put each group $M_l$ into a $C_N$ symmetric configuration, i.e., $\forall 1\leq i \leq N, 1\leq l\leq M$
\begin{align}
z_{li} &= e^{\mathbb{J}\frac{2\pi(i-1)}{N}}z_{l1} 
\end{align}
Then by symmetry of the Hamiltonian, we will have an orbit s.t. each vortices in each group $M_i, 1\leq i \leq M$ follow a $C_N$ symmetric orbit. We only need to study the Hamiltonian taking the $C_N$ symmetry into account. Denote $\displaystyle w_l = z_{l1},  1\leq l \leq M$ for short, which serves as a representative of the $N$ vortices in the $l$-th group $M_l$. We then consider the simplified Hamiltonian system 
\begin{equation}
\label{sys:Hsym} 
\Gamma \dot{\textbf{w}}(t) = \textit{X}_{H^{sym}}(\textbf{w}(t)) = \mathcal{J}_M\nabla H^{sym}(\textbf{w}(t)) \quad \textbf{w}=(w_1,w_2,...,w_M),\quad  w_i\in \mathbb{R}^2  \tag{H-Sym}
\end{equation}
where 
\begin{align*}
H^{sym}(\textbf{w})&= -\frac{1}{4\pi}\sum_{\substack{1\leq p,q\leq M\\ 1\leq i,j\leq N \\(p,i) \neq (q,j)}}\Gamma_p \Gamma_q \log{|e^{\mathbb{J}\frac{2\pi i}{N}}w_p-e^{\mathbb{J}\frac{2\pi j}{N}}w_q   |^2}   
\end{align*}
Clearly each periodic solution of the system \eqref{sys:Hsym} will imply a $C_N$ symmetric periodic solution of the original $M\times N$-vortex problem as in system \eqref{sys:H1}. If we further more require that $I(\textbf{w}) = \frac{1}{N}$, then it corresponds to a normalised $C_N$-symmetric periodic solution of the original $M\times N$-vortex problem as in system \eqref{sys:H1}.

\begin{figure}
\captionsetup{justification=centering}
\begin{center}
\includegraphics[width=40mm,scale=0.5]{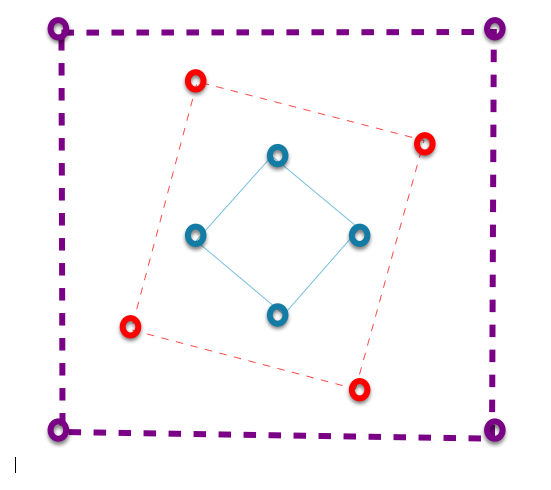}
\caption{An example of a $M\times N$-vortex configuration that is $C_N$ symmetric, with M=3, N=4}
\label{fig:4}
\end{center}
\end{figure}
We resume the above discussion in the following proposition:
\begin{Thm}
Consider the above symmetric $M\times N$-vortex problem with positive vorticities s.t. $\Gamma_{li} =\Gamma_{lj} ,1\leq l \leq M, 1\leq i<j \leq N$. Let 
\begin{align*}
&\mathcal{Z}^{sym}_{0}= \{\textbf{w}| \text{\textbf{w} is a } \text{ normalised orbit of the system } \eqref{sys:Hsym}  \}\\
&\mathcal{Z}^{sym}_2 = \{ \textbf{w}| \text{\textbf{w} is a } \text{ NTNRPO of the system }\eqref{sys:Hsym}\}\\
&\mathcal{H}^{sym}_{0} = \{h \in \mathbb{R} | h = H^{sym}(\textbf{w}), \textbf{z}\in \mathcal{Z}^{sym}_{0}\}\\
&\mathcal{H}^{sym}_{2} = \{h \in \mathbb{R} | h = H^{sym}(\textbf{w}), \textbf{z}\in \mathcal{Z}^{sym}_{2}\}
\end{align*}
Then $\mathcal{H}^{sym}_{2}$ is dense in $\mathcal{H}^{sym}_{0}$. In other words, there are infinitely many $C_N$-symmetric non-trivial normalised periodic solutions of the original $M\times N$-vortex problem in system \eqref{sys:H1}. 
\end{Thm}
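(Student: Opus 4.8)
The plan is to transport the entire architecture of the proof of Theorem \ref{Thm:Main} to the reduced symmetric system \eqref{sys:Hsym}. This system is again an autonomous Hamiltonian system, now on $\mathbb{R}^{2M}$ in the representatives $\textbf{w}=(w_1,\dots,w_M)$; it is invariant under the diagonal rotation $w_l\mapsto e^{\mathbb{J}\theta}w_l$ (each summand depends only on $|e^{\mathbb{J}2\pi i/N}w_p-e^{\mathbb{J}2\pi j/N}w_q|$), and $I(\textbf{w})=\sum_l\Gamma_l|w_l|^2$ is a first integral. Since each squared distance appearing in $H^{sym}$ is homogeneous of degree two in $\textbf{w}$, Euler's identity gives $\nabla H^{sym}(\textbf{w})\cdot\textbf{w}=-\tfrac{L^{sym}}{2\pi}$ for a fixed constant $L^{sym}>0$ (all vorticities being positive), so the exact analogue of Proposition \ref{Pro:NRE} holds: $\textbf{w}\in\mathcal{Z}^{sym}_1$ iff $\nabla H^{sym}(\textbf{w})=-\tfrac{L^{sym}}{4\pi}\nabla I(\textbf{w})$. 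Thus I only need the symmetric counterparts of the two pillars of Theorem \ref{Thm:Main}: that $\mathcal{H}^{sym}_1$ is a closed null set, and an almost-existence argument on a compact reduced space.

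The crucial preliminary is a minimum-distance bound, and here I would avoid reproving Lemma \ref{Lem:MinDist} by a lifting argument. Every $\textbf{w}\in\mathcal{Z}^{sym}_1$ with $I(\textbf{w})=\tfrac1N$ lifts, via $z_{li}=e^{\mathbb{J}2\pi(i-1)/N}w_l$, to a configuration of the full $M\times N$-vortex problem \eqref{sys:H1} that is a genuine normalised relative equilibrium: the lift is centred automatically since $\sum_i e^{\mathbb{J}2\pi(i-1)/N}=0$, and the scaling $I(\textbf{w})=\tfrac1N$ makes the full moment of inertia equal to $1$. Since this enlarged problem has $MN$ positive vorticities, Lemma \ref{Lem:MinDist} applies verbatim and yields a uniform $\epsilon>0$ with $|e^{\mathbb{J}2\pi(i-1)/N}w_p-e^{\mathbb{J}2\pi(j-1)/N}w_q|^2>\epsilon$ over all of $\mathcal{Z}^{sym}_1$; in particular the intra-ring terms ($p=q$, $i\neq j$) force $|w_l|$ itself to be bounded away from $0$. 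This settles the one place where the self-interaction terms of $H^{sym}$ could have caused trouble.

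With this bound in hand, the arguments of Theorem \ref{Thm:ClosedNull} and Corollary \ref{Col:OpenDense} carry over word for word to $H^{sym}$: a convergent sequence in $\mathcal{Z}^{sym}_1$ stays away from collision, so $H^{sym}$ is smooth at the limit, giving closedness of $\mathcal{H}^{sym}_1$; and applying Sard's theorem to $f^{sym}(\textbf{w})=2H^{sym}(\textbf{w})+\tfrac{L^{sym}}{2\pi}I(\textbf{w})$, whose critical set is exactly $\mathcal{Z}^{sym}_1$ by the previous paragraph's characterisation, shows $f^{sym}(\mathcal{Z}^{sym}_1)$ is null, whence (since $I\equiv\tfrac1N$ there) $\mathcal{H}^{sym}_1$ is null. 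Therefore $\mathcal{H}^{sym}_0\setminus\mathcal{H}^{sym}_1$ is open and dense in $\mathcal{H}^{sym}_0$.

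Finally I would run the symplectic reduction drawn in the example. Because the $C_N$-symmetric lift is automatically centred, no Lim reduction of a translational mode is needed: fixing $I=\tfrac1N$ lands on $\mathbb{S}^{2M-1}\subset\mathbb{C}^M$, and the Hopf quotient by $SO(2)$ produces a reduced Hamiltonian $\tilde H^{sym}$ on $\mathbb{CP}^{M-1}$ (for the example $M=2$ this is the $\mathbb{CP}^1$ of the diagram). The analogue of Lemma \ref{Lem:compact} holds since every squared distance is quadratic, hence bounded above on the sphere, while $H^{sym}\to+\infty$ near collisions, so nonempty energy surfaces are compact. Then $c_0(\mathbb{CP}^{M-1},\omega)=\pi<\infty$, and the almost-existence theorem (exactly as in Theorem \ref{Thm:ManyRPOLocal}) yields, near any regular energy value, a sequence $\lambda_k$ and non-constant periodic orbits on $\mathbb{CP}^{M-1}$ with $\tilde H^{sym}=\lambda_k\to c$; each lifts to a $C_N$-symmetric NTNRPO, being non-constant in the reduced space and hence not a relative equilibrium. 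This gives density of $\mathcal{H}^{sym}_2$ in $\mathcal{H}^{sym}_0\setminus\mathcal{H}^{sym}_1$, and combined with the previous paragraph, $\mathcal{H}^{sym}_2$ is dense in $\mathcal{H}^{sym}_0$. The main obstacle, and the only genuinely new point, is the bookkeeping of the lift in the second paragraph — verifying that a symmetric relative equilibrium of \eqref{sys:Hsym} corresponds exactly to a normalised relative equilibrium of the $MN$-vortex problem with matching angular velocity and the correct $1/N$ normalisation — since everything downstream is then inherited from the non-symmetric theory.
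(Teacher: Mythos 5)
Your proposal is correct and is essentially the paper's own argument: the paper proves this theorem with the single line \enquote{Similar as the discussion in theorem \ref{Thm:Main}}, and your write-up is precisely that transport, with your lifting trick (a relative equilibrium of \eqref{sys:Hsym} with $I(\textbf{w})=\tfrac1N$ lifts to a genuine normalised relative equilibrium of the full $MN$-vortex problem, so Lemma \ref{Lem:MinDist} applies verbatim) being the natural way to fill in the one step the paper leaves implicit. One small slip to fix: with the normalisation $I(\textbf{w})=\tfrac1N$, dotting $\nabla H^{sym}=\tfrac{\omega}{2}\nabla I$ with $\textbf{w}$ gives $\tfrac{\omega}{2}=-\tfrac{N L^{sym}}{4\pi}$, so the correct characterisation is $\nabla H^{sym}(\textbf{w})=-\tfrac{N L^{sym}}{4\pi}\nabla I(\textbf{w})$ and the Sard function should be $f^{sym}=2H^{sym}+\tfrac{N L^{sym}}{2\pi}I$; as written, your $f^{sym}$ has critical set equal to the relative equilibria with $I(\textbf{w})=1$ rather than $\tfrac1N$, which is harmless in the end (the two energy sets differ by the constant shift $\tfrac{L^{sym}}{4\pi}\log N$, by the scaling invariance) but must be corrected for your stated \enquote{iff} to hold.
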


\begin{proof}:
Similar as the discussion in theorem \ref{Thm:Main}.
\end{proof}

\begin{Rmk}
Again, since one doesn't need to worry about the degree of freedom, we can take M to be any positive integer, as long as there exists regular and compact energy surface in the (symplectically and symmetrically) reduced phase spaces.
\end{Rmk}
It is clear to see the advantages and drawbacks of our method compared to previous works:
\begin{enumerate}
\item The symplectic capacity does not have constraints on the dimension of reduced phase space, hence problem with more degrees of freedom could be considered.
\item The symplectic capacity does have constraints on the topology of reduced phase space, thus only applies to certain phase manifolds. In particular, in our planar case, we need a positive vorticity situation. 
\end{enumerate}
One can also compare our argument to variational methods with symmetry constraints applied in celestial mechanics, for example, the discussion in \cite{chenciner2000remarkable} \cite{chenciner2003action}. There, a variational argument, i.e., the Marchal's lemma (as a consequence of the minimisation of Lagrangian and a careful analysis of Kepler's orbit) is applied to get one part of the collision free orbit. Then the discrete symmetry is applied to complete the whole orbit. One can consider our argument as having a similar flavor, where the black box of Marchal's lemma is replaced by that of symplectic capacity. However, the symplectic capacity gives us the whole orbit directly. On the other hand, the admissible symmetry groups in our setting is less rich than those in the celestial mechanics setting, see for example \cite{chenciner2009unchained}.

\appendix
\section{Finiteness of Energy Levels For Normalised Equilibria With Positive Rational Vorticities}
\label{appendix:A}
In this appendix we prove theorem \ref{Thm:RationalFinite} in chapter \ref{sec:2}. First we give some definitions as preparation.
\begin{Def}
\label{Def:CAS}
A \textbf{closed algebraic set} is the locus of zeros of a collection of polynomials. 
\end{Def}
The following lemma is taken from Albouy and Kaloshin\cite{albouy2012finiteness}:
\begin{Lem}
\label{Lem:Dominating}
(\cite[page~540]{albouy2012finiteness})Let $X$ be a closed algebraic subset of $\mathbb{C}^N$ and $f:\mathbb{C}^N \rightarrow \mathbb{C}$ be a polynomial. Either the image $f(X) \subset \mathbb{C}$ is a finite set, or it is the complement of a finite set. In the second case one says that $f$ is \textbf{dominating}.
\end{Lem}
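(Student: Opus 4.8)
The plan is to deduce the dichotomy from two classical facts of affine algebraic geometry: the structure of the image of a dominant morphism, and the triviality of the Zariski topology on the affine line $\mathbb{A}^1 = \mathbb{C}$. First I would reduce to the irreducible case. Since $X$ is a closed algebraic set it is Noetherian, so it decomposes into finitely many irreducible components $X = X_1 \cup \cdots \cup X_r$, and then $f(X) = \bigcup_{i=1}^r f(X_i)$. Because a finite union of sets each of which is finite or cofinite is again finite or cofinite (a superset of a cofinite subset of $\mathbb{C}$ is cofinite, and a finite union of finite sets is finite), it suffices to prove the claim when $X$ is irreducible.

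So assume $X$ is irreducible. Then $f|_X : X \to \mathbb{C}$ is a morphism of affine varieties, so its image is irreducible and its Zariski closure $\overline{f(X)}$ is an irreducible closed subset of $\mathbb{A}^1$. The key topological input is that the proper Zariski-closed subsets of $\mathbb{A}^1$ are exactly the finite sets; hence the only irreducible closed subsets of $\mathbb{A}^1$ are the single points and the whole line. This forces a dichotomy: either $\overline{f(X)} = \{c\}$ is a single point, in which case $f(X) = \{c\}$ is finite, or $\overline{f(X)} = \mathbb{C}$, i.e. $f|_X$ is dominant.

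In the dominant case I would invoke the standard theorem that a dominant morphism of irreducible varieties has image containing a nonempty Zariski-open subset of the target (a special case of Chevalley's theorem on constructible images, itself a consequence of elimination theory). For the target $\mathbb{A}^1$ a nonempty open subset is precisely the complement of a finite set; hence $f(X)$ contains a cofinite subset of $\mathbb{C}$ and is therefore itself cofinite. Assembling the two cases over all components, $f(X)$ is finite if every component maps to a point, and cofinite as soon as at least one component maps dominantly.

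I expect the only nontrivial step to be the appeal to the image-of-dominant-morphism result; everything else is the topology of the line plus bookkeeping over the irreducible components. One could instead quote Chevalley's theorem directly to conclude that $f(X)$ is a constructible subset of $\mathbb{C}$, and then use that constructible subsets of $\mathbb{A}^1$ are finite or cofinite, which packages exactly the same elimination-theoretic content. Since the target is one-dimensional, no genuinely new obstacle arises beyond this single input.
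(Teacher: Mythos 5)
Your proof is correct, but there is nothing in the paper to compare it against: the paper does not prove this lemma at all. It is quoted as a black box from Albouy--Kaloshin \cite[page~540]{albouy2012finiteness}, and the citation is the entirety of the paper's justification. Your argument is the standard proof of the dichotomy and supplies exactly the elimination-theoretic content that the citation packages: decompose $X$ into its finitely many irreducible components (legitimate, since $X$ is Noetherian, and the bookkeeping that a finite union of finite-or-cofinite subsets of $\mathbb{C}$ is again finite or cofinite is right); for an irreducible component, use that the only irreducible Zariski-closed subsets of $\mathbb{A}^1$ are singletons and the whole line, so the restriction of $f$ either has constant value or is dominant; and in the dominant case invoke the theorem that a dominant morphism of irreducible varieties has image containing a nonempty Zariski-open subset of the target, which in $\mathbb{A}^1$ means a cofinite set. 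Your alternative packaging via Chevalley's theorem (the image of a constructible set is constructible, and constructible subsets of $\mathbb{A}^1$ are precisely the finite and cofinite sets) is equally valid and arguably cleaner, since it avoids the case split altogether. The only omitted triviality is the empty case $X=\emptyset$, which is covered by ``finite.'' In short, your write-up would serve as a self-contained replacement for the paper's citation.
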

A necessary condition for a polynomial to be dominating is the following condition:
\begin{Lem}
\label{Lem:smoothpoint}
(\cite[page~42]{mumford1976algebraic}) A dominating polynomial $f$ on a closed algebraic subset possesses \textbf{smooth points}, i.e., points where the dimension of the tangent space is minimal and where $df \neq 0$.
\end{Lem}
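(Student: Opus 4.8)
The plan is to localize the problem to a single irreducible component on which $f$ is nonconstant, and then to combine two standard facts over $\mathbb{C}$: the density of the smooth locus of an irreducible variety, and the generic non-vanishing of the differential of a nonconstant regular function. This last ingredient is the only place where the characteristic-zero nature of $\mathbb{C}$ is essential.

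First I would decompose the closed algebraic set $X \subseteq \mathbb{C}^N$ into its finitely many irreducible components $X = X_1 \cup \cdots \cup X_r$. By Chevalley's theorem each image $f(X_i)$ is constructible in $\mathbb{A}^1 = \mathbb{C}$, hence finite or cofinite (exactly the dichotomy underlying Lemma \ref{Lem:Dominating}). Since $f$ is dominating, $f(X) = \bigcup_i f(X_i)$ is cofinite and thus infinite; if every $f(X_i)$ were finite the union would be finite, a contradiction. So some component $X_{i_0}$ has $f(X_{i_0})$ cofinite, i.e.\ $f|_{X_{i_0}}$ is a nonconstant regular function on the irreducible variety $X_{i_0}$.

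Next I would carve out a good open subset of $X_{i_0}$. The singular locus $\mathrm{Sing}(X_{i_0})$ is a proper Zariski-closed subset, so the smooth locus $U = X_{i_0}\setminus \mathrm{Sing}(X_{i_0})$ is nonempty, dense, and open, and at each of its points the tangent space attains its minimal dimension $\dim X_{i_0}$. Deleting the other components as well, the set $U' = U \setminus \bigcup_{j \neq i_0} X_j$ remains nonempty and open in $X_{i_0}$, and every point of $U'$ lies on $X_{i_0}$ alone; such a point is a smooth point of $X$ in the sense required.

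The crux is to show that the differential of $f$ along the tangent directions does not vanish identically on $U'$. Indeed, if $df_x = 0$ on $T_x X_{i_0}$ for every $x \in U'$, then $f$ would be locally constant on the smooth connected set $U'$, hence constant on $U'$ and on its closure $X_{i_0}$, contradicting the nonconstancy just obtained. This is exactly where characteristic zero is used: in positive characteristic a nonconstant function such as a $p$-th power can have identically vanishing differential. Therefore $V = \{x \in U' : df_x \neq 0\}$ is a nonempty open set, and any $x \in V$ is at once a smooth point of $X$ and a point where $df \neq 0$, which proves the lemma. I expect this final differential-control step, a form of generic smoothness, to be the main obstacle, whereas the reduction to a component and the density of the smooth locus are routine.
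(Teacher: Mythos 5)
The paper offers no proof of this lemma at all: it is imported as a black box from Mumford \cite{mumford1976algebraic}, so there is no internal argument to compare yours against --- what you have written is a proof of the cited fact itself. Your route is the natural one and its skeleton is correct: decompose $X$ into irreducible components, use Chevalley's theorem to get the finite/cofinite dichotomy for each $f(X_i)$ so that domination forces $f$ to be nonconstant on some component $X_{i_0}$, pass to the dense open set $U'$ of smooth points of $X_{i_0}$ lying on no other component, and kill the possibility $df\equiv 0$ by a characteristic-zero argument. One remark in your favour that you leave implicit: your points have tangent-space dimension minimal \emph{along $X_{i_0}$}, i.e.\ equal to the local dimension, and this local reading of \enquote{minimal} is the only one under which the lemma is true --- if one insists on the global minimum over all of $X$, the statement fails (take $X$ the union of a plane and a transversal line in $\mathbb{C}^3$, with $f$ a coordinate that is dominating on the plane but constant on the line: the globally minimal tangent dimension occurs only on the line, where $df$ vanishes on the tangent space).

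The one step you assert rather than prove is the connectedness of $U'$, and that is exactly where the weight of the argument sits. \enquote{$df_x=0$ for all $x$ implies $f$ locally constant} is a statement about the classical (Euclidean) topology on the complex manifold $U'$, so to upgrade it to constancy you need $U'$ connected in the classical topology; Zariski-connectedness, which is what irreducibility hands you for free, is not a priori enough. The fact you need --- that the smooth locus of an irreducible complex variety is classically connected, and stays connected after deleting a proper algebraic subset --- is a genuine theorem of essentially the same depth as the lemma, and it happens to be proved in the very book the paper cites. So either cite it explicitly, or route around it. Two ways to route around it: (i) local constancy makes $f(U')$ countable, while Chevalley makes $f(U')$ finite or cofinite, hence finite; then $U'\subseteq f^{-1}(f(U'))$ is contained in a finite union of fibers, and Zariski-density of $U'$ in the irreducible $X_{i_0}$ forces $f$ to be constant on $X_{i_0}$, the desired contradiction, with no appeal to classical topology; or (ii) invoke generic smoothness of the dominant morphism $f|_{X_{i_0}}\colon X_{i_0}\to\mathbb{A}^1$ in characteristic zero, which yields a nonempty open set where $df\neq 0$ directly and is the formulation closest to the textbook statement the paper actually uses.
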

Now back to our subject. Consider the Hamiltonian system
\begin{equation}
\label{sys:G1}
\Gamma \dot{\vz}(t) = \textit{X}_G(\vz(t)) = \mathcal{J}\nabla G(\vz(t)) \quad \dot{\vz}=(z_1,z_2,...,z_N),\quad  z_i\in \mathbb{R}^2 \tag{G1}
\end{equation}
\begin{align*}
G(z) = \prod_{1\leq i<j \leq N} |z_i-z_j|^{\Gamma_i\Gamma_j}   
\end{align*}
The relation between the Hamiltonian $G$ and Hamiltonian $H$ is justified by the relation $G(z) = exp\{-2\pi H(z)\}$. The dynamic interpretation of this reparametrization is that, in case of no collision, we reparametrise the orbit; while when ever collision happens, we replace the collision orbit by a fixed point. We define 
\begin{align*}
\mathcal{Z}_{1}(G) &=\{\textbf{z}\in R^{2N}| \text{\textbf{z} is a normalised relative equilibrium of the system } \eqref{sys:G1} \}\\
\mathcal{Z}^{2\pi}(G) &=\{\textbf{z}\in R^{2N}| \text{\textbf{z} is a relative equilibrium of the system } \eqref{sys:G1},\\ & \quad \quad \quad \quad  \text{ with minimal period T} =2\pi\}\\
\mathcal{G}_{1}& = \{g \in \mathbb{R} | g = G(\textbf{z}), \textbf{z}\in \mathcal{Z}_{1}(G)\}\\
\mathcal{G}^{2\pi}& = \{g \in \mathbb{R} | g = G(\textbf{z}), \textbf{z}\in \mathcal{Z}^{2\pi}(G) \}
\end{align*}
Note that for all relative equilibrium in $\mathcal{Z}^{2\pi}(G)$ the angular velocity $\displaystyle \omega =\frac{2\pi}{T}$ is fixed to be 1.
The first observation is the following rescaling property. Recall that $\displaystyle L = \sum_{1\leq i< j \leq N} \Gamma_i \Gamma_j$. 
\begin{Lem}
\label{Lem:Scaling}
Suppose $z(t)$ is an orbit of \eqref{sys:G1}. Then for $\lambda >0$, $\tilde{z}(t) = \lambda z(\lambda^{L-2}t )$ is also an orbit of \eqref{sys:G1}.
\end{Lem}
\begin{proof}:
This can be verified directly. Let $\tilde{z}(t) = \alpha z(\beta t)$. Since $z(t)$ is an orbit, we have
\begin{align} 
\dot{z}_i(t) = \mathbb{J} \nabla_{z_i}G(\textbf{z(t)})= \mathbb{J} \sum_{i\neq j}\Gamma_i\Gamma_j(\frac{G(\textbf{z}(t))}{|z_i(t)-z_j(t)|^{\Gamma_i\Gamma_j} } |z_i(t)-z_j(t)|^{\Gamma_i\Gamma_j-2} (z_i(t)-z_j(t))) 
\end{align}
As a result, we have 
\begin{align*}
\dot{\tilde{z}}_i(t) &= \mathbb{J}\alpha \beta  \nabla_{z_i} G(\textbf{z}(\beta t))\\
&= \mathbb{J}\alpha \beta\sum_{i\neq j}\Gamma_i\Gamma_j(\frac{G(\textbf{z}(\beta t))}{|z_i(\beta t)-z_j(\beta t)|^{\Gamma_i\Gamma_j} } |z_i(\beta t)-z_j(\beta t)|^{\Gamma_i\Gamma_j-2} (z_i(\beta t)-z_j(\beta t))) \\
&= \mathbb{J}\alpha^{2-L} \beta\sum_{i\neq j}\Gamma_i\Gamma_j(\frac{G(\alpha \textbf{z}(\beta t))}{|\alpha z_i(\beta t)- \alpha z_j(\beta t)|^{\Gamma_i\Gamma_j} } |\alpha z_i(\beta t)-\alpha z_j(\beta t)|^{\Gamma_i\Gamma_j-2} (\alpha z_i(\beta t)-\alpha z_j(\beta t)))\\
&= \mathbb{J} \alpha^{2-L} \beta \nabla_{z_i} G(\tilde{\textbf{z}}(t))
\end{align*}
Let $\alpha = \lambda, \beta =  \lambda^{L-2}$, the result follows.
\end{proof}
For a centred relative equilibrium  of \eqref{sys:G1}, the energy, the angular velocity and the angular momentum are closely related by the following lemma:
\begin{Lem}
Suppose now that \textbf{z} is a centred relative equilibrium  of \eqref{sys:G1}, with angular velocity $\omega$ and angular momentum $I(\textbf{z})$. Then 
\begin{enumerate}
\item $\displaystyle {\nabla G(\textbf{z}) = \frac{\omega}{2} \nabla I(\textbf{z}(t))}$
\item $\displaystyle \omega= \frac{LG}{I}   $
\end{enumerate}
\label{Lem:relationGI}
\end{Lem}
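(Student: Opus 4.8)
The plan is to combine the rigid-rotation structure of a centred relative equilibrium with Euler's identity for the positively homogeneous function $G$. The whole argument is essentially a one-line computation once the two structural facts are unwound, so the emphasis will be on bookkeeping rather than on any hard estimate.

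First I would record what it means for $\textbf{z}$ to be a \emph{centred} relative equilibrium of \eqref{sys:G1}: the vorticity centre is fixed at the origin and all vortices rotate rigidly at a common angular velocity $\omega$, so that $z_i(t) = e^{\mathbb{J}\omega t}z_i(0)$ for each $i$, and therefore $\dot{z}_i(t) = \omega\,\mathbb{J}\, z_i(t)$. Substituting this into the equations of motion $\Gamma_i\dot{z}_i = \mathbb{J}\nabla_{z_i}G(\textbf{z})$ and multiplying both sides on the left by $-\mathbb{J}$ (recalling $\mathbb{J}^2 = -\mathrm{Id}$) yields the identity
\begin{align*}
\Gamma_i\,\omega\, z_i = \nabla_{z_i}G(\textbf{z}), \qquad 1\leq i\leq N.
\end{align*}
Since $I(\textbf{z}) = \sum_{i}\Gamma_i|z_i|^2$ gives $\nabla_{z_i}I = 2\Gamma_i z_i$, the right-hand side equals $\tfrac{\omega}{2}\nabla_{z_i}I(\textbf{z})$, and assembling over all $i$ produces statement $(1)$.

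For statement $(2)$ I would take the Euclidean inner product of $(1)$ with $\textbf{z}$. On the right this gives $\tfrac{\omega}{2}\langle\nabla I(\textbf{z}),\textbf{z}\rangle = \tfrac{\omega}{2}\cdot 2I(\textbf{z}) = \omega I$. For the left-hand side I observe that $G$ is positively homogeneous of degree $L = \sum_{i<j}\Gamma_i\Gamma_j$: under the scaling $\textbf{z}\mapsto\lambda\textbf{z}$ each factor $|z_i-z_j|^{\Gamma_i\Gamma_j}$ is multiplied by $\lambda^{\Gamma_i\Gamma_j}$, whence $G(\lambda\textbf{z}) = \lambda^{L}G(\textbf{z})$. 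Euler's identity then gives $\langle\nabla G(\textbf{z}),\textbf{z}\rangle = L\,G$. Equating the two expressions yields $L\,G = \omega I$, that is, $\omega = LG/I$, which is statement $(2)$.

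I do not expect a genuine obstacle in this lemma. The only points demanding care are the sign conventions for $\mathbb{J}$ (in particular, checking that the equations of motion together with the chosen orientation give $\dot{z}_i = \omega\,\mathbb{J}\,z_i$ rather than the opposite sign) and the verification that the homogeneity degree of $G$ is \emph{exactly} $L$ and not some other exponent; both are settled immediately by unwinding the definitions, so the derivation reduces to the Euler-identity computation above.
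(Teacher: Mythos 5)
Your proof is correct and follows essentially the same route as the paper: statement (1) is read off from the rigid-rotation form $\dot{z}_i = \omega\,\mathbb{J}\,z_i$ of a centred relative equilibrium, and statement (2) follows by pairing (1) with $\textbf{z}$, using Euler's identity $\langle\nabla G(\textbf{z}),\textbf{z}\rangle = L\,G(\textbf{z})$ for the degree-$L$ homogeneous function $G$. The paper's two-line proof is just a terser version of this same computation, with the homogeneity argument left implicit.
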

\begin{proof}:
1. This is direct consequence by the definition of the centred relative equilibrium.\\
2. Given that $\displaystyle {\nabla G(\textbf{z}) = \frac{\omega}{2} \nabla I(\textbf{z}(t))}$, we take inner product with $\textbf{z}$ on both sides and the result follows.
\end{proof}

\begin{Lem}
\label{Lem:IntegerFinite}
If $\Gamma_i \in \mathbb{N}^+$ and $\Gamma_i \geq 2$,  then $\mathcal{G}^{2\pi}$ is a finite set.
\end{Lem}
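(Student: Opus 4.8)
The plan is to follow the Albouy--Kaloshin strategy made available by Lemmas \ref{Lem:Dominating} and \ref{Lem:smoothpoint}: realise the values in $\mathcal{G}^{2\pi}$ as the image of a polynomial on a complex algebraic set, and then rule out that this polynomial is \emph{dominating} by showing its differential vanishes at every smooth point. The two hypotheses enter separately: $\Gamma_i \in \mathbb{N}^+$ makes the exponents $\Gamma_i\Gamma_j$ positive integers, so that $G^2$ is a genuine polynomial, while $\Gamma_i \geq 2$ guarantees $L = \sum_{i<j}\Gamma_i\Gamma_j > 2$.

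First I would reduce the finiteness of $\mathcal{G}^{2\pi}$ to the finiteness of a scale--invariant quantity. By Lemma \ref{Lem:relationGI} a centred relative equilibrium satisfies $\omega = LG/I$, and by the homogeneity recorded in Lemma \ref{Lem:Scaling} ($G$ is homogeneous of degree $L$, $I$ of degree $2$) the quantity $\Phi := G^2/I^L$ is constant along each scaling family $\vz \mapsto \lambda \vz$. Since $L-2>0$, within such a family there is a unique positive scale at which $\omega = 1$, and there $I = LG$, whence $\Phi = G^{2-L}L^{-L}$ and $G = (L^L\Phi)^{1/(2-L)}$. As $2-L \neq 0$ this is a bijection between the positive values of $\Phi$ and the corresponding values of $G$ on $\mathcal{Z}^{2\pi}(G)$, so $\mathcal{G}^{2\pi}$ is finite precisely when $\Phi$ takes finitely many values on the relative equilibria; by scale invariance and normalising to $I=1$, the latter set equals the set of values of the polynomial $G^2 = \prod_{i<j}((z_i-z_j)(w_i-w_j))^{\Gamma_i\Gamma_j}$ on the normalised relative equilibria.

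Next I would complexify, treating $z_j$ and $\bar z_j =: w_j$ as independent variables, so that $(z_i-z_j)/|z_i-z_j|^2 = 1/(w_i-w_j)$ and the condition $\nabla G \parallel \nabla I$, after dividing by $G$ and introducing the scalar $\sigma = \omega/G$, becomes
\[
\sum_{j \neq i}\frac{\Gamma_j}{w_i - w_j} = \sigma\, z_i,
\qquad
\sum_{j \neq i}\frac{\Gamma_j}{z_i - z_j} = \sigma\, w_i
\qquad (1 \leq i \leq N).
\]
Clearing denominators and adjoining the centring relations $\sum_i \Gamma_i z_i = \sum_i \Gamma_i w_i = 0$ and the normalisation $\sum_i \Gamma_i z_i w_i = 1$ yields polynomial equations; I let $X \subset \mathbb{C}^{2N+1}$ be the Zariski closure of their collision--free solution locus, a closed algebraic set on which $f := G^2$ is a polynomial (this is where $\Gamma_i \in \mathbb{N}$ is used). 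By Lemma \ref{Lem:Dominating}, either $f(X)$ is finite, which is what we want, or $f$ is dominating. To exclude the second case I differentiate $\log G = \tfrac12\sum_{i<j}\Gamma_i\Gamma_j\log((z_i-z_j)(w_i-w_j))$ and substitute the two equations above, getting $\partial_{z_i}\log G = \tfrac{\Gamma_i}{2}\sigma w_i$ and $\partial_{w_i}\log G = \tfrac{\Gamma_i}{2}\sigma z_i$, hence $d\log G = \tfrac{\sigma}{2}\,d(\sum_i \Gamma_i z_i w_i) = \tfrac{\sigma}{2}\,dI$. Since $I \equiv 1$ on $X$, any tangent vector $v$ at a smooth point has $dI(v)=0$, so $df(v) = 2G^2\,d\log G(v) = \sigma G^2\,dI(v) = 0$; thus $f$ has no smooth point with $df \neq 0$ and, by Lemma \ref{Lem:smoothpoint}, cannot be dominating. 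Therefore $f(X)$ is finite, and tracing back through the reduction gives the finiteness of $\mathcal{G}^{2\pi}$.

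The step I expect to be delicate is the control of $X$ near the collision loci: both the polynomiality used above and the identity $d\log G = \tfrac{\sigma}{2}\,dI$ hold only away from the diagonals where some $z_i = z_j$ or $w_i = w_j$, so one must ensure that the smooth points of $X$ relevant to the argument stay bounded away from these diagonals. This is the complexified analogue of Lemma \ref{Lem:MinDist}, and it is precisely the elimination--theoretic point that I would carry out in detail in the appendix; everything else is the essentially formal bookkeeping sketched above.
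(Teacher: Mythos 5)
Your proposal runs on the same engine as the paper's proof: both encode the relative equilibria as a closed algebraic subset of a complex affine space, both invoke Lemma \ref{Lem:Dominating} and Lemma \ref{Lem:smoothpoint}, and both rule out domination by showing that the relevant polynomial has vanishing differential along the variety. The differences are in the execution, and they matter at exactly one point. The paper never normalises and never divides by $G$: it works directly with the $\omega=1$ equation (\ref{P}) defining $\mathcal{Z}^{2\pi}(G)$, whose coefficients $\delta_{ij}$ involve only the exponents $\Gamma_p\Gamma_q$ and $\Gamma_i\Gamma_j-2$ --- this is where $\Gamma_i\geq 2$ enters the paper, keeping all exponents non-negative so that the system is polynomial --- and then takes $g=2G+I$. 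Each component of $\nabla(2G+I)$ is $2\Gamma_i$ times a defining polynomial of $\mathcal{A}$, so $dg=0$ holds identically on all of $\mathcal{A}$, including whatever points of $\mathcal{A}$ lie on the diagonals; collisions never need to be discussed. You instead first pass to $I=1$ via the scale-invariant $\Phi=G^2/I^L$ (the reverse of the scaling reduction the paper performs only in the \emph{next} lemma; this is where your $\Gamma_i\geq2\Rightarrow L>2$ is used), then divide the equilibrium equations by $G$ to introduce $\sigma=\omega/G$ and clear denominators. The price is that your key identity $d\log G=\tfrac{\sigma}{2}dI$ is valid only off the diagonals, which is precisely the step you defer.

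About that deferred step: your instinct that something must be checked is right, but the fix you propose --- a \enquote{complexified analogue of Lemma \ref{Lem:MinDist}} keeping the relevant points of $X$ away from the diagonals --- is not available. Lemma \ref{Lem:MinDist} is a real, positivity-driven estimate; closed algebraic sets over $\mathbb{C}$ of this kind do in general meet the diagonals, and controlling such degenerations is the hard part of \cite{albouy2012finiteness}, not a routine appendix. Fortunately you do not need it: since $X$ is by construction the Zariski closure of the collision-free solution locus, that locus is Zariski-dense in every irreducible component of $X$; the condition that $df$ vanish on the tangent space is closed on the smooth locus and holds on a dense subset of it, hence on all of it, and Lemma \ref{Lem:smoothpoint} then applies component by component. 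With that substitution (or by adopting the paper's formulation, which avoids dividing by $G$ altogether), your argument is complete. It even repairs a small imprecision of the paper: with $w_i=\bar z_i$ treated as an independent variable, $f=\prod_{i<j}\bigl((z_i-z_j)(w_i-w_j)\bigr)^{\Gamma_i\Gamma_j}$ is a genuine polynomial for arbitrary positive integer vorticities, whereas the paper's $|z_p-z_q|^{\Gamma_p\Gamma_q}$, read in complexified real coordinates, is a polynomial only when the exponents $\Gamma_p\Gamma_q$ are even.
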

\begin{proof}:
Consider $\textbf{z}\in\mathcal{Z}^{2\pi}(G)$, it satisfies the following algebraic systems
\begin{align*}
\begin{pmatrix}
x_1\\
y_1
\end{pmatrix} &=\sum_{1\neq i} \Gamma_i \delta_{i1} \begin{pmatrix}
x_{1i}\\
y_{1i}
\end{pmatrix}\\
\begin{pmatrix}
x_2\\
y_2
\end{pmatrix} &=\sum_{2\neq i} \Gamma_i \delta_{i2} \begin{pmatrix}
x_{2i}\\
y_{2i}
\end{pmatrix}\\
&\mathrel{\makebox[\widthof{=}]{\vdots}}  \label{P} \tag{P}\\ 
\begin{pmatrix}
x_N\\
y_N
\end{pmatrix} &=\sum_{N\neq i} \Gamma_i \delta_{iN} \begin{pmatrix}
x_{Ni}\\
y_{Ni}
\end{pmatrix}\\
\end{align*}
where $x_{ij} = x_j - x_i$, $y_{ij} = y_j - y_i$, and $\displaystyle \delta_{ij} =
G(z) = (\prod_{\substack{1\leq p < q \leq N\\ (p,q)\neq (i,j)}} |z_p-z_q|^{\Gamma_p\Gamma_q}) |z_i-z_j|^{\Gamma_i\Gamma_j-2}$. If we consider $x_i$, $y_i$ and $\delta_{ij}$ as complex numbers, the system (\ref{P}) is a polynomial system in $\mathbb{C}^{2N}$. This system then defines a closed algebraic subset $\mathcal{A} \subset \mathbb{C}^{2N}$. \\
On the other hand, by lemma 2 in chapter 2, we see that $\displaystyle {\nabla G(\textbf{z}) = \frac{\omega}{2} \nabla I(\textbf{z}(t))}$ while $\displaystyle \omega= \frac{LG}{I}$. Taking $\omega = 1$, it turns out that for any $z\in \mathcal{G}^{2\pi}$, it satisfies 
\begin{align}
2\nabla G(\textbf{z}) =\nabla I(\textbf{z}(t)), \quad I = LG
\end{align}
Consider the function $g = 2G + I$ as a polynomial on $\mathcal{A}$. Since $dg = 0$ on $\mathcal{A}$,  $g$ does not possede any smooth point on $\mathcal{A}$. As a result $g$ is not a dominating polynomial due to lemma \ref{Lem:smoothpoint} . Thus according to lemma \ref{Lem:Dominating},   $g(\mathcal{A})$ contains only finitely many values in $\mathbb{C}$. But on $\mathcal{A}$, we must have $g= 2G + I = (L+2)G$. Since $L>0$ is a constant, we thus conclude that $G$ itself only gain finitely many values on $\mathcal{A}$. In otherwords, $\mathcal{G}^{2\pi}$ is a finite set.
\end{proof}

We have proved that relative equilibrium with fixed angular velocity only possedes finitely many energy levels. This however implies that relative equilibrium with fixed angular possedes only finitely many energy levels too. 
\begin{Lem}
If $\Gamma_i \in \mathbb{Q}^+$, then $\mathcal{G}_{1}$ is a finite set.  
\end{Lem}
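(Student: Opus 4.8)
The plan is to deduce the claim from Lemma \ref{Lem:IntegerFinite} by two successive rescalings: one of the configuration, which trades the normalisation $I=1$ (defining $\mathcal{G}_1$) for the normalisation $\omega=1$ (defining $\mathcal{G}^{2\pi}$), and one of the vorticities, which turns positive rationals into integers $\geq 2$. Each rescaling will be shown to induce a strictly monotone bijection on the relevant set of energy values, so that finiteness is preserved along the whole chain.

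First I would record the homogeneities along a scaling orbit. If $\textbf{z}$ is a centred relative equilibrium of \eqref{sys:G1}, then by Lemma \ref{Lem:Scaling} so is $\lambda\textbf{z}$ (suitably reparametrised in time), and directly from the definitions $I(\lambda\textbf{z})=\lambda^2 I(\textbf{z})$ and $G(\lambda\textbf{z})=\lambda^{L}G(\textbf{z})$, whence Lemma \ref{Lem:relationGI} gives $\omega(\lambda\textbf{z})=\lambda^{L-2}\omega(\textbf{z})$. As long as $L\neq 2$, both $I$ and $\omega$ are strictly monotone along the orbit, so each orbit carries exactly one point with $I=1$ and exactly one point with $\omega=1$; eliminating $\lambda$ then expresses the $G$-value of the latter as a fixed function of the $G$-value $g$ of the former, with an exponent proportional to $1/(L-2)$ and the same for every orbit. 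This function is strictly monotone, hence a bijection $\mathcal{G}_1\to\mathcal{G}^{2\pi}$, so for $L\neq 2$ the set $\mathcal{G}_1$ is finite if and only if $\mathcal{G}^{2\pi}$ is.

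Next I would rescale the vorticities. Writing the relative-equilibrium condition of Lemma \ref{Lem:relationGI} componentwise yields the shape equation $\sum_{j\neq i}\Gamma_j\,(z_i-z_j)/|z_i-z_j|^2=\sigma z_i$ for a scalar $\sigma=\omega/G>0$; replacing $\mathbf{\Gamma}$ by $k\mathbf{\Gamma}$ only multiplies $\sigma$ by $k$ and leaves the centring condition unchanged, so the set of centred relative-equilibrium configurations is exactly the same for $\mathbf{\Gamma}$ and for $k\mathbf{\Gamma}$; only the normalisation moves, since $I_{k\mathbf{\Gamma}}=k\,I_{\mathbf{\Gamma}}$. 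Scaling a configuration by $\mu=k^{-1/2}$ therefore matches the normalised relative equilibria of $\mathbf{\Gamma}$ with those of $k\mathbf{\Gamma}$, and under it the energy transforms as $G_{k\mathbf{\Gamma}}(\mu\textbf{z})=\mu^{k^2 L}G_{\mathbf{\Gamma}}(\textbf{z})^{k^2}$, again strictly monotone; hence the $\mathbf{\Gamma}$-version of $\mathcal{G}_1$ is finite iff its $k\mathbf{\Gamma}$-version is. Given $\Gamma_i\in\mathbb{Q}^+$, I would take $k$ to be twice a common denominator, so that each $k\Gamma_i$ is an integer $\geq 2$ and $L_{k\mathbf{\Gamma}}=k^2L\geq 4>2$. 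Then the second paragraph applies to the vorticities $k\mathbf{\Gamma}$ and reduces finiteness of $\mathcal{G}_1$ to finiteness of $\mathcal{G}^{2\pi}$, which is exactly Lemma \ref{Lem:IntegerFinite}; chaining the equivalences proves the claim.

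The delicate point is the configuration rescaling, which silently requires $L\neq 2$: when $L=2$ the angular velocity is scale-invariant, so the constraint $\omega=1$ no longer fixes the scale and the bijection $\mathcal{G}_1\leftrightarrow\mathcal{G}^{2\pi}$ degenerates. This is exactly why I would perform the vorticity rescaling first and choose it large enough to force $L_{k\mathbf{\Gamma}}\geq 4$, keeping every use of the scaling correspondence in the non-degenerate range. Beyond this, the only thing to verify carefully is that each energy map above is injective, so that it is genuine finiteness, and not merely surjectivity of an image, that propagates along the chain.
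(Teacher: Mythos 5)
Your proposal is correct and takes essentially the same route as the paper: both use the scaling of Lemma \ref{Lem:Scaling} together with the relation $\omega = LG/I$ of Lemma \ref{Lem:relationGI} to pass between the normalisations $I=1$ and $\omega=1$, rescale rational vorticities by twice a common denominator to reach integers $\geq 2$, and then invoke Lemma \ref{Lem:IntegerFinite}. The differences are only presentational (the paper argues by contradiction along a strictly increasing sequence of energies where you exhibit a strictly monotone bijection $\mathcal{G}_1 \to \mathcal{G}^{2\pi}$), and you are in fact slightly more careful than the paper, which asserts $\mathcal{Z}_{1}(\tilde{G}) = \mathcal{Z}_{1}(G)$ without the compensating configuration rescaling $\mu = k^{-1/2}$ that you supply to account for the change of the normalisation $I$ under the vorticity rescaling.
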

\begin{proof}: First, we assume that $\Gamma_i \in \mathbb{N}^+$ and $\Gamma_i \geq 2$. In this case, 
Suppose to the contrary that $\{\textbf{z}^k\}_{k\in \mathbb{N}}\in \mathcal{Z}_1(G)$ s.t.
\begin{align}
0<G(\textbf{z}^1) < G(\textbf{z}^2) <G(\textbf{z}^3) ...<G(\textbf{z}^k) <... \label{increasing}
\end{align}
by lemma \ref{Lem:relationGI} their frequencies satisfy $\displaystyle \omega_k= \frac{LG(\textbf{z}^k)}{I(\textbf{z}^k)} = LG(\textbf{z}^k) >0$, moreover (\ref{increasing}) implies
\begin{align}
0<\omega_1 < \omega_2<...< \omega_k<... \label{omg} 
\end{align}
Now define $\displaystyle \tilde{\textbf{z}}^k(t) = (\omega_k)^{\frac{1}{2-L}} \textbf{z}^k(\frac{1}{\omega_k} t)$, by lemma \ref{Lem:Scaling}, $\displaystyle \tilde{\textbf{z}}^k \in \mathcal{G}^{2\pi}$. 
Then by lemma \ref{Lem:IntegerFinite}, $G(\tilde{\textbf{z}}^k)$ has only finite values. Again by lemma \ref{Lem:relationGI} ,  $I(\tilde{\textbf{z}}^k) = LG (\tilde{\textbf{z}}^k)$. Thus 
$\displaystyle I(\tilde{\textbf{z}}^k) = (\omega_k)^{\frac{2}{2-L}}$ has only finite values. By  (\ref{omg}) this leads to a contradiction. As a result, the lemma is proved.\\
Now for general case, suppose that $\displaystyle \Gamma_i = \frac{p_i}{q_i}\in \mathbb{Q}^+$. let $K = lcm(q_1,q_2,...,q_n)$ be the least common multiple of $q_1, q_2,..., q_N$. Consider now the new Hamiltonian 
\begin{equation}
\tilde{G} =  \prod_{1\leq i<j \leq N} |z_i-z_j|^{\tilde{\Gamma}_i\tilde{\Gamma}_j}  \label{sys:G2} \tag{G2}
\end{equation}
with $\displaystyle   \tilde{\Gamma}_i = 2K\Gamma_i, 1\leq i \leq N$. Now $ \tilde{\Gamma}_i \in \mathbb{N}^+$ and $\tilde{\Gamma}_i \geq 2$, thus we are back to previous situation. As a result $\tilde{\mathcal{G}}_{1}$ is a finite set. But note that $\displaystyle \tilde{G}(\textbf{z}) = (G(\textbf{z}))^{4K^2}$ and  $\displaystyle \mathcal{Z}_{1}(\tilde{G}) =  \mathcal{Z}_{1}(G)$, hence $\mathcal{G}_{1}$ itself is also a finite set and the lemma is proved.
\end{proof}
Now it is easy to prove Theorem \ref{Thm:RationalFinite}:
\begin{proof}
(\textbf{proof of Theorem \ref{Thm:RationalFinite}}):
Clearly $\mathcal{Z}_1(H) =  \mathcal{Z}_1(G)$ , and $\displaystyle G(\textbf{z}) = exp(-2\pi H(\textbf{z}) )$. Since $\mathcal{G}_{1}$ is a finite set, $\mathcal{H}_{1}$ is a finite set too.
\end{proof}
We have thus proved Theorem 1 under the assumption that $\Gamma_i \in \mathbb{Q}^+$. Some remarks might be useful:
\begin{Rmk}
Note that we have only proved the finiteness of energy surface for normalised relative equilibria, not the finiteness for normalised relative equilibria. 
\end{Rmk}
\begin{Rmk}
The switching from logarithm to polynomial serves to provide a linear relation between $G(\textbf{z})$ and $I(\textbf{z})$ when $\textbf{z}$ is a relative equilibrium. Actually, if we work directly with $H$, one verifies that 
$\displaystyle \nabla H(\textbf{z}) \textbf{z} = -\frac{L}{2\pi}$ for any orbit $\textbf{z}$, with is a constant and we cannot benifit from any homogeneous condition.  
\end{Rmk}

\section{Canonical Transformation In Symplectic Reduction: Five Vortices As An Example}
\label{CT}
In this appendix we use explicit canonical transformation to proceed the symplectic reduction for a 5-vortex problem. It can be generalized to $N$-vortex without any extra difficulty assuming that the vorticities are all positive. We will sometimes switch between real transformations and complex transformations without emphasizing it when no confusion should happen. To this end, consider the vortex Hamiltonian in the plane:
\begin{align*}
\Gamma \dot{\vz}(t) = X_H(\vz(t)) = \mathcal{J}\nabla H(\vz(t)) \quad \dot{\vz}=(z_1,z_2,...,z_N),\quad  z_i\in \mathbb{R}^{2}
\end{align*}
where
\begin{align*}
H(z)= -\frac{1}{4\pi}\sum_{i,j =1, i< j}^{N}\Gamma_{i}\Gamma_{j}\log{|z_i -z_j|^2}, \Gamma_i>0
\end{align*}
First, we make the change of variable 
$Z_i = (X_i, Y_i) = (\sqrt{\Gamma_i}x_i,\sqrt{\Gamma_i}y_i )$. It turns out that $\textbf{Z}=(Z_1,Z_2,...,Z_N)$ follows the usual Hamiltonian system 
\begin{align*}
\dot{\textbf{Z}}(t) = \textit{X}_H(\textbf{Z}(t)) = \mathcal{J}\nabla H(\textbf{Z}(t)) \quad \textbf{Z}=(Z_1,Z_2,...,Z_N),\quad  Z_i\in \mathbb{R}^{2}
\end{align*}
Now apply the following generalized Jacobi coordinates calculated in \cite{lim1989canonical}. Let 
$W = T Z$, where 
\begin{align*}
&A = \begin{bmatrix}
(\frac{\Gamma_1\Gamma_2}{\Gamma_1+\Gamma_2})^{\frac{1}{2}} &&&& \\
&(\frac{\Gamma_3\Gamma_4}{\Gamma_3+\Gamma_4})^{\frac{1}{2}} &&& \\
&&(\frac{(\Gamma_1+\Gamma_2)\Gamma_5}{\Gamma_1+\Gamma_2+\Gamma_5})^{\frac{1}{2}}&& \\
&&&(\frac{(\Gamma_1+\Gamma_2+\Gamma_5)(\Gamma_3+\Gamma_4)}{\Gamma_1+\Gamma_2+\Gamma_3+\Gamma_4+ \Gamma_5})^{\frac{1}{2}}  & \\
&&&& \frac{1}{\Gamma_1+\Gamma_2+\Gamma_3+\Gamma_4+ \Gamma_5}\\
\end{bmatrix}\\
&B=\begin{bmatrix}
-\frac{1}{\sqrt{\Gamma_1}}& \frac{1}{\sqrt{\Gamma_2}} & 0 & 0& 0 \\
0&0 &-\frac{1}{\sqrt{\Gamma_3}}& \frac{1}{\sqrt{\Gamma_4}}& 0\\
\frac{-\sqrt{\Gamma_1}}{\Gamma_1 + \Gamma_2}&\frac{-\sqrt{\Gamma_2}}{\Gamma_1 + \Gamma_2}&&&\frac{1}{\sqrt{\Gamma_5}}\\
\frac{-\sqrt{\Gamma_1}}{\Gamma_1 + \Gamma_2 + \Gamma_5}&\frac{-\sqrt{\Gamma_2}}{\Gamma_1 + \Gamma_2 + \Gamma_5}& \frac{\sqrt{\Gamma_3}}{\Gamma_3 + \Gamma_4}&\frac{\sqrt{\Gamma_4}}{\Gamma_3 + \Gamma_4}& \frac{-\sqrt{\Gamma_5}}{\Gamma_1 + \Gamma_2 + \Gamma_5}\\
\sqrt{\Gamma_1} &\sqrt{\Gamma_2} &\sqrt{\Gamma_3} &\sqrt{\Gamma_4} &\sqrt{\Gamma_5} \\
\end{bmatrix} 
\end{align*}
while $T = AB$. Then the transformation $T: \mathbb{C}^5 \rightarrow \mathbb{C}^5$
\begin{align*}
\textbf{W} = T \textbf{\textbf{Z}}, \quad (q,p) = T(X,Y) 
\end{align*}
when seen as a transformation $\mathbb{R}^{10} \rightarrow \mathbb{R}^{10}$, defines a symplectic transformation. \\
Now since $q_5$ is a first integral, $p_5$ is a cyclic variable. We can fix their value to be both 0 and reduce the Hamiltonian to 

\begin{align*}
\dot{\textbf{W}}(t) = X_H(\textbf{W}(t)) = \mathcal{J}\nabla H(\textbf{W}(t)) \quad \textbf{W}=(W_1,W_2,...,W_4 ; (0,0) ),\quad  W_i\in \mathbb{R}^{2}
\end{align*}
We see that the symplectic transformation T is linear, as a result the Hamiltonian expressed in the new conjugate variables $\textbf{W} $ is still invariant under rotation. In other words, 
\begin{align*}
\sum_{i=1}^{4} |W_i|^2 = \textbf{cst}
\end{align*}
is a first integral.
Next, consider the polar coordinates $W_i = (q_i,p_i) \rightarrow \bar{W}_i=(r_i,\theta_i)$ by letting
\begin{align*}
W_j = r_je^{i\theta_j}, \quad 0\leq r_j < \infty,    0\leq \theta_j < 2\pi, \quad 1\leq j\leq N-1 
\end{align*}
Let $\rho_j = \frac{r_j^2}{2}$. Consider the change of variable:
\begin{align*}
\bar{\textbf{W}}  = (\textbf{r}, \mathbf{\theta}) \rightarrow  
\tilde{\textbf{W}}= (\textbf{I}, \mathbf{\phi})
\end{align*}
where 
\begin{align*}
\begin{cases}
I_1 = \rho_1 + \rho_2 + \rho_3 + \rho_4 \\
I_2 = \rho_2 \\
I_3 = \rho_3 \\
I_4 = \rho_4
\end{cases}
\quad 
\begin{cases}
\phi_1 = \theta_1  \mod(2\pi)\\
\phi_2 = \theta_2 - \theta_1 \mod(2\pi)\\
\phi_3 = \theta_3 - \theta_1 \mod(2\pi)\\
\phi_4 = \theta_4 - \theta_1 \mod(2\pi)\\
\end{cases}
\end{align*}
It can be easily verified that $S:(\textbf{q},\textbf{p}) \rightarrow (\textbf{I}, \phi)$ is a symplectic transformation. Moreover, since $I_1$ is a first integral, $\phi_1$ is a cyclic variable. Thus we can fix $I_1 = 1$ and the resulted Hamiltonian represents the dynamics on $\mathbb{CP}^3$. \\

\section*{Acknowledgement}
The author thanks Dr. Jacques F{\'e}joz and Dr. Eric S{\'e}r{\'e} for many inspiring discussions.

%\nolinenumbers

%This is where your bibliography is generated. Make sure that your .bib file is actually called library.bib

%\bibliography{library}

%This defines the bibliographies style. Search online for a list of available styles.
%\bibliographystyle{abbrv}

\end{document}